\newcommand{\id}{{{\rm id}}}
\newcommand{\subgroup}{\leq}
    \newcommand{\BE}{{\mathbb {E}}}
     \newcommand{\BN}{{\mathbb {N}}}
    \newcommand{\BR}{{\mathbb {R}}}
    \newcommand{\R}{{\mathbb {R}}}
    \newcommand{\BZ}{{\mathbb {Z}}}
    \newcommand{\Z}{{\mathbb {Z}}}
    \newcommand{\Q}{{\mathbb {Q}}}
    \newcommand{\Hom}{{\mathrm{Hom}}}
    \newcommand{\Tor}{{\mathrm{Tor}}}
\newcommand{\cd}{\operatorname{cd}}
\newcommand{\vcd}{\operatorname{vcd}}
\newcommand{\fingd}[1]{\ensuremath{\underline{\mathrm{gd}} \, #1}}
\newcommand{\vcycgd}[1]{\ensuremath{\underline{\underline{\mathrm{gd}}} \, #1}}
\newcommand{\catname}[1]{{\normalfont\textbf{#1}}}
\newcommand{\Or}{\catname{Or}}
\newcommand{\gd}{\operatorname{gd}}
\newcommand{\define}{\emph}
\newcommand{\CondM}{\operatorname{M}}
\newcommand{\CondNM}{\operatorname{NM}}
\newcommand{\epiarrow}{\twoheadrightarrow}
\newcommand{\abel}{\operatorname{ab}}
\newcommand{\eqnstop}{\, .}
\newcommand{\eqncomma}{\, ,}
\def\-{^{-1}}
\DeclareRobustCommand{\huu}[1]{\texorpdfstring{\uu{#1}}{#1}}
\newcommand{\delete}[1]{}
    \theoremstyle{plain}
\newtheorem{thm}{Theorem}[section]
\newtheorem{defn}[thm]{Definition} % definition numbers are dependent on theorem numbers
\newtheorem{lem}[thm]{Lemma}
\newtheorem{prop}[thm]{Proposition}
\newtheorem{cor}[thm]{Corollary}
\newtheorem{rem}[thm]{Remark}
\newtheorem{question}[thm]{Question}
\newtheorem{notation}[thm]{Notation}
\newtheorem*{thm*}{Theorem}
\newtheorem*{rem*}{Remark}
\newtheorem*{cor*}{Corollary}
\newtheorem*{namedthm}{\namedthmname}
\newcounter{namedthm}
\renewenvironment{proof}[1][\proofname]{{\bfseries #1. \quad}}{\qed}
\newenvironment{named}[1]
  {\def\namedthmname{#1}%
   \refstepcounter{namedthm}%
   \namedthm\def\@currentlabel{#1}}
  {\endnamedthm}
\newcommand{\family}[1]{\mathcal{#1}}
\newcommand{\TR}{\family{TR}}
\newcommand{\FIN}{\family{F}in}
\newcommand{\CYC}{\family{C}yc}
\newcommand{\VCYC}{\family{VC}yc}
    \numberwithin{equation}{section}
\newcommand{\uueg}{{\underline{\underline E}}G}
\newcommand{\uubg}{{\underline{\underline B}}G}
\newcommand{\uu}[1]{\uline{\uline{#1}}}
\renewcommand{\u}[1]{\uline{#1}}
\def\Proof{\noindent{\bf Proof}\quad}
\def\qed{\hfill$\square$\smallskip}
\newenvironment{cond}[1]{%
  \vspace{1.2ex}\begin{enumerate}%
  \item%
}{%
  \end{enumerate}\vspace{1.2ex}%
}
\begin{document}

\title{Some results related to finiteness properties of groups for families of  subgroups}

\author{Timm von Puttkamer}
\address{University of Bonn, Mathematical Institute, Endenicher Allee 60, 53115 Bonn, Germany}
\email{tvp@math.uni-bonn.de}

\author{Xiaolei Wu}
\address{University of Bonn, Mathematical Institute, Endenicher Allee 60, 53115 Bonn, Germany}
\email{xwu@math.uni-bonn.de}
%    General info
\subjclass[2010]{20B07, 20J05}

\date{July, 2018}

\keywords{Artin groups, conjugacy growth, CAT(0) cube group, virtually cyclic groups, poly-$\BZ$-groups}

\begin{abstract}
Let $\uueg$ be the classifying space of $G$ for the family of virtually cyclic subgroups. We show that an Artin group admits a finite model for $\uu E G$ if and only if it is virtually cyclic. This solves a conjecture of Juan-Pineda--Leary and a question of L\"uck-Reich-Rognes-Varisco for Artin groups. We then study conjugacy growth of CAT(0) groups and show that if a CAT(0) group contains a free abelian group of rank two, its conjugacy growth is strictly faster than linear. This also yields an alternative proof for the fact that a CAT(0) cube group admits a finite model for $\uu E G$ if and only if it is virtually cyclic. Our last result deals with the homotopy type of the quotient space $\uubg =\uueg/G$. We show for a poly-$\BZ$-group $G$, that $\uubg$ is homotopy equivalent to a finite CW-complex if and only if $G$ is  cyclic. 
\end{abstract}

\maketitle

\section{Introduction}

In this paper, we continue our study \cite{vPW2,vPW1} of finiteness properties of classifying spaces for families of subgroups. Given a discrete group $G$, a family $\mathcal{F}$ of subgroups of $G$ is a set of subgroups of $G$ which is closed under conjugation and taking subgroups. The classifying space $E_{\mathcal{F}}(G)$ is a $G$-CW-complex characterized by the following property: the fixed point set $E_{\mathcal{F}}(G)^H$ is contractible for any $H \in \mathcal{F}$ and empty otherwise. Recall that a $G$-CW-complex $X$ is said to be finite (resp. of finite type) if it has finitely many orbits of cells (resp. finitely many orbits of cells in each dimension). We abbreviate $E_{\mathcal{F}}(G)$ by $\uueg$ for $\mathcal{F} = \VCYC$ the family of virtually cyclic subgroups, $\u EG$ for $\mathcal{F} = \FIN$ the family of finite subgroups and $EG$ for $\mathcal{F}$ the family consisting only of the trivial subgroup. In \cite[Conjecture 1]{JuanPinedaLeary}, Juan-Pineda and Leary formulated the following conjecture:

\begin{named}{Conjecture A}   \cite [Juan-Pineda and Leary]{JuanPinedaLeary}\label{ConjA}
Let $G$ be a group admitting a finite model for $\uueg$. Then $G$ is virtually cyclic.
\end{named}

This conjecture has been proven for a rather large class of groups  \cite{GW2013,KMN2011,vPW2,vPW1} (see Theorem \ref{bVCYC-thm} and the remark thereafter for a summary). Note that a group $G$ admits a model for $\uueg$ with a finite $0$-skeleton if and only if the following property holds

\begin{cond}{(BVC)}\label{cond:BVC}
\textit{$G$ has a finite set of virtually cyclic subgroups $\{V_1,V_2,\ldots, V_n\}$ such that every virtually cyclic subgroup of $G$ is conjugate to a subgroup of some $V_i$.}
\end{cond}

Following Groves and Wilson, we shall call this property BVC. So far almost all of the proofs of \ref{ConjA} boil down to verifying whether a given group has BVC. Our first theorem extends the existing results to Artin groups.

\begin{named}{Theorem A}\label{thma}
An Artin group has BVC if and only if it is virtually cyclic. 
\end{named}

For the proof, we used a weaker property compared to BVC, namely $b\VCYC$ (see Definition \ref{defn-bfamily} and remarks below it). It is a related notion designed to be closed under forming quotients, see \Cref{bfquotient}. Let $\CYC$ be the family of cyclic subgroups. A related question of L\"uck-Reich-Rognes-Varisco asks: is it true that a group $G$ has a model of finite type for $E_{\CYC} G$ if and only if $G$ is finite, cyclic or infinite dihedral? By \cite[Lemma 3.2 + Corollary 3.10]{vPW2}, our result also answers their question for Artin groups.

In \cite{vPW2}, we established some connection between the BVC property and the conjugacy growth invariant. In particular, we showed \cite[Corollary 2.5]{vPW2} that if a semihyperbolic group has BVC, then it has at most linear conjugacy growth. But we could not determine exactly when a semihyperbolic group has at most linear conjugacy growth \cite[Remark 2.6]{vPW1}. Instead we give some partial answer to this question here.

\begin{named}{Theorem B}\label{thmb}
Let $G$ be a  CAT(0) group containing $\BZ^2$ as a subgroup, then the conjugacy growth of $G$ is strictly faster than linear. If $G$ is a CAT(0) cube group, then it has at most linear conjugacy growth if and only it is virtually cyclic.
\end{named}

Given a group $G$ and a family of subgroups $\family F$ of $G$ we also study the finiteness properties of the classifying space $B_{\family F}(G) = E_{\family F}(G)/G$. The following question goes back to \cite[Remark 17]{JuanPinedaLeary} and motivated our study:
\begin{named}{Question C}\label{uubg-finite-ques}
Suppose $\uu BG$ is homotopy equivalent to a finite CW-complex. Is $\uu BG$ necessarily contractible?
\end{named}
In contrast to \ref{uubg-finite-ques}, in the case of the family of finite subgroups, Leary and Nucinkis showed \cite{LearyNucinkis2001} that every connected finite CW-complex is homotopy equivalent to $\underline{B}G$ for some group $G$. In fact, Januszkiewicz and \'Swiçatkowski \cite[Theorem M]{JJ06} further proved that one can take this group to be hyperbolic. On the other hand, Juan-Pineda and Leary showed that if $G$ is hyperbolic, then $\uubg$ is homotopy equivalent to a finite CW-complex if and only if $G$ is virtually cyclic \cite[Corollary 16]{JuanPinedaLeary}. We extend their result to the following: 

\begin{named}{Theorem C}
Suppose $G$ is an abelian group or a poly-$\Z$-group. Then $\uu BG$ is homotopy equivalent to a finite CW-complex if and only if $G$ is locally virtually cyclic.
\end{named}

Since the torsion elements in a finitely generated nilpotent group form a finite normal subgroup and the quotient by this normal subgroup is a poly-$\Z$-group, we have the following.

\begin{named}{Corollary D}
Suppose $G$ is a finitely generated nilpotent group. Then $\uu BG$ is homotopy equivalent to a finite CW-complex if and only if $G$ is  virtually cyclic.
\end{named}

Parts of this paper have previously appeared in the first author's thesis.

\textbf{Acknowledgements.} The first author was supported by an IMPRS scholarship of the Max Planck Society. The second author was partially supported by Prof. Wolfgang L\"uck's ERC Advanced Grant ``KL2MG-interactions'' (no.  662400). Part of this work was carried out when the second author was a postdoc at the Max Planck Institute for  Mathematics in Bonn. We also want to thank Pieter Moree for drawing our attention to results of Paul Bernays and himself.

\section{Review on Groups Admitting a Finite Model for \texorpdfstring{$\uueg$}{EG} and Property \texorpdfstring{$b\mathcal{F}$}{bF}} \label{basic}
In this section we first review some properties  and results on groups admitting a finite model for $\uueg$. Most of this material is taken directly from \cite[Section 1]{vPW2}.

We summarize the properties of groups admitting a finite model for $\uueg$ as follows:

\begin{prop}
Let $G$ be a group admitting a finite model for $\uueg$, then

\begin{enumerate}
    \item  $G$ has BVC, 

    \item $G$ admits a finite model for $\underline{E}G$. 

    \item For every finite subgroup $H \subgroup G$, the Weyl group $W_GH$ is finitely presented and of type $FP_{\infty}$. Here $W_GH = N_G(H)/H$, where $N_G(H)$ is the normalizer of $H$ in $G$.

    \item $G$ admits a model of finite type for $EG$. In particular, $G$ is finitely presented.

\end{enumerate}

\end{prop}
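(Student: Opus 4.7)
The plan is to verify the four assertions in turn, in the order (a), (b), (d), (c). The main obstacle is (b); once a finite $\underline{E}G$ is in hand, parts (c) and (d) are essentially formal, and the whole argument is spelled out in \cite[Section 1]{vPW2}.

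For (a), let $X$ be a finite model for $\uueg$. Since cell stabilizers in $E_{\family F}G$ always lie in $\family F$, the $0$-skeleton decomposes as a finite disjoint union of orbits $G/V_1 \sqcup \cdots \sqcup G/V_n$ with each $V_i \in \VCYC$. For any virtually cyclic $V \leq G$, the fixed-point set $X^V$ is contractible and hence nonempty, so contains a $0$-cell whose stabilizer is conjugate to some $V_i$; thus $V$ is subconjugate to $V_i$, which is BVC.

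For (b), I would invoke the Lück--Weiermann pushout, which expresses $\uueg$ in terms of $\underline{E}G$ together with the classifying spaces $E_{\mathcal F[V]} N_G[V]$ for each conjugacy class $[V]$ of maximal infinite virtually cyclic subgroups. By (a) there are only finitely many such $[V]$, and each $N_G[V]$ is tractable; solving the pushout for the remaining piece yields a finite model $Y$ for $\underline{E}G$. For (d), every cell stabilizer $F$ of $Y$ is finite and admits a model $EF$ of finite type (via e.g.\ the bar construction); substituting each orbit $G/F \times D^n$ of $Y$ by $G \times_F EF \times D^n$ yields a free $G$-CW-complex with contractible total space and finitely many orbits of cells in each dimension, hence a finite-type model for $EG$. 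Finite presentability is the $n \leq 2$ case of the same construction.

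For (c), the fixed set $Y^H$ is a finite contractible CW-complex on which $N_G(H)$ acts by restriction; the stabilizer of $y \in Y^H$ equals $G_y \cap N_G(H) \subseteq G_y$, which is finite since $G_y$ is. Because $H$ acts trivially on $Y^H$, the induced action of $W_G H = N_G(H)/H$ on $Y^H$ again has finite stabilizers, exhibiting $Y^H$ as a finite model for $\underline{E}(W_G H)$. Applying (d) to $W_G H$ then shows it is of type $F_\infty$, and therefore finitely presented and of type $FP_\infty$.
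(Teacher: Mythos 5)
The paper itself does not prove this proposition; it is quoted from \cite[Section~1]{vPW2}. Your parts (a), (c) and (d) follow the standard arguments from that source and are essentially correct: (a) is exactly the finite-$0$-skeleton characterization of BVC (Lemma~\ref{BVCfinitezeroskeleton}), and the orbit-replacement trick in (d) and the identification of $Y^H$ as a model for $\underline{E}(W_GH)$ in (c) are the right ideas. Two small points you should not gloss over: in (c), the assertion that $Y^H$ is a \emph{finite} $N_G(H)$-CW-complex requires checking that for each orbit $G/F\times D^n$ of $Y$ the fixed set $(G/F)^H$ decomposes into finitely many $N_G(H)$-orbits (these are indexed by the $F$-conjugacy classes of subgroups of the finite group $F$ that are $G$-conjugate to $H$, so there are finitely many); and in (d), replacing $G/F\times D^n$ by $G\times_F EF\times D^n$ requires lifting the attaching maps, which is handled by the homotopy-colimit description of classifying spaces rather than by an ad hoc substitution.

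The genuine gap is in (b). The L\"uck--Weiermann pushout of Theorem~\ref{theorem:passing-to-larger-families} \emph{constructs} a model for $\uueg$ out of an \emph{arbitrarily chosen} model for $\underline{E}G$ and the spaces $E_{\mathcal G[V]}(N_G[V])$; it cannot be ``solved'' in the other direction. Knowing that some model of $\uueg$ is finite tells you nothing about the corner $\underline{E}G$ of that square: the model of $\uueg$ you are handed need not arise from any such pushout, a $G$-homotopy equivalence does not transport finiteness to subcomplexes, and the claim that ``each $N_G[V]$ is tractable'' is unsupported (these normalizers can be arbitrary). The correct argument for (b) is the same orbit-replacement device you use in (d), but run one family down: every virtually cyclic $V$ admits a finite, at most one-dimensional model for $\underline{E}V$ (a point, $\R$, or $\R$ with the dihedral action, cf.\ Lemma~\ref{ub-for-virtually-cyclic-subgroups}), so replacing each orbit $G/V\times D^n$ of the finite model for $\uueg$ by $G\times_V\underline{E}V\times D^n$ produces a finite model for $\underline{E}G$. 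This ``going down in families'' finiteness transfer is a standard result (see L\"uck's survey on classifying spaces, Theorem~4.3, or the corresponding statement in \cite{LueckWeiermann}), and it---not the pushout---is what (b) rests on.
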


\begin{rem}
If one replaces finite by finite type in the assumptions of the above proposition, then the conclusions still hold if one also replaces finite by finite type in (b).
\end{rem}

\begin{lem} \cite[Lemma 1.3]{vPW1} \label{BVCfinitezeroskeleton}
Let $G$ be a group. There is a model for $\uueg$ with finite $0$-skeleton if and only if $G$ has BVC.
\end{lem}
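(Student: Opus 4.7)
The plan is to prove the two directions by passing between the $0$-skeleton of a model for $\uueg$ and a set of conjugacy class representatives witnessing BVC. Throughout, one uses the basic fact that for any $G$-CW-complex $X$ and any cell of $X$, its stabilizer fixes a point of $X$, so cell stabilizers in a model for $\uueg$ must lie in $\VCYC$.

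For the forward direction, suppose $X$ is a model for $\uueg$ with finite $0$-skeleton. First I would write $X^{(0)} = \coprod_{i=1}^{n} G/V_i$ where each $V_i$ is virtually cyclic (by the remark above). Now pick any $H \in \VCYC$. By the defining property of $\uueg$, the fixed-point set $X^H$ is contractible, hence non-empty. Being a non-empty $G$-CW-subcomplex, $X^H$ must contain a $0$-cell, represented by some coset $gV_i$. The condition that $H$ fixes $gV_i$ is equivalent to $g^{-1}Hg \leq V_i$, so $H$ is conjugate to a subgroup of some $V_i$. This is BVC.

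For the reverse direction, suppose $G$ has BVC with witnesses $V_1,\ldots,V_n$. The strategy is to start from $X_0 := \coprod_{i=1}^{n} G/V_i$, viewed as a $0$-dimensional $G$-CW-complex with finite $0$-skeleton, and then enlarge it to a full model for $\uueg$ without adding new $0$-cells. By BVC, $X_0^H \neq \emptyset$ for every $H \in \VCYC$, and $X_0^H = \emptyset$ otherwise since all stabilizers lie in $\VCYC$. To promote $X_0$ to a model for $\uueg$, I would invoke the standard equivariant obstruction-theoretic procedure: inductively attach $G$-cells of the form $G/W \times D^{k}$ with $W \in \VCYC$ and $k \geq 1$, killing successively the homotopy groups of each $X^H$ for $H \in \VCYC$ (e.g.\ as in L\"uck's survey on classifying spaces for families). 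Because every attached cell has dimension $\geq 1$, the $0$-skeleton remains equal to $X_0$ and is therefore finite.

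The main obstacle is ensuring, in the reverse direction, that every obstruction can be killed by attaching a cell whose stabilizer still lies in $\VCYC$. This is the heart of the general existence theorem for $E_{\family F}(G)$: whenever $X^H$ has a nontrivial higher homotopy group for some $H \in \VCYC$, one represents the relevant homotopy class by a map from $S^{k-1}$ into $X^H$ and caps it off with a $G$-cell of type $G/H \times D^{k}$, whose stabilizers are conjugates of $H$ and therefore lie in $\VCYC$. Once this standard argument is in place, both implications follow without further complication.
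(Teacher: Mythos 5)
The paper does not reproduce a proof of this lemma (it is quoted from \cite[Lemma 1.3]{vPW1}), but your argument is correct and is essentially the standard one used there: the stabilizers of the finitely many $0$-cells give BVC witnesses via non-emptiness of the contractible fixed-point sets, and conversely one builds $\uueg$ over $\coprod_i G/V_i$ by the usual inductive attachment of $G$-cells $G/H\times D^k$ with $H\in\VCYC$ and $k\geq 1$, which leaves the $0$-skeleton finite.
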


The following structure theorem about virtually cyclic groups is well known, see for example \cite[Proposition 4]{JuanPinedaLeary} for a proof.

\begin{lem}  \label{vcstru}
Let $G$ be a virtually cyclic group. Then $G$ contains a unique maximal normal finite subgroup $F$ such that one of the following holds

\begin{enumerate}

\item the finite case, $G =F$;
\item the orientable case, $G/F$ is the infinite cyclic group;
\item the nonorientable case, $G/F$ is the infinite dihedral group.

\end{enumerate}

\end{lem}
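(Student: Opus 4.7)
The plan is to reduce to a well-understood central extension. If $G$ is finite, take $F=G$ and we are in case (a), so assume $G$ is infinite. First fix an infinite cyclic subgroup $C\leq G$ of finite index and pass to its normal core $N=\bigcap_{g\in G}gCg^{-1}$: since $C$ has only finitely many conjugates, $N$ is a finite intersection of finite-index subgroups, hence has finite index in $G$, and as a subgroup of the infinite cyclic group $C$ it is itself infinite cyclic. The conjugation action on $N$ gives a homomorphism $G\to\Aut(N)\cong\BZ/2$ whose kernel $G^+$ has index at most $2$ in $G$, and by construction $N$ is central in $G^+$, yielding a central extension
\[
1\longrightarrow N\longrightarrow G^+\longrightarrow Q\longrightarrow 1
\]
with $Q$ finite.

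The heart of the argument, and the main obstacle, is to show that the set $T$ of torsion elements of $G^+$ forms a finite normal subgroup with $G^+/T\cong\BZ$. I would push the above extension along the inclusion $N\cong\BZ\hookrightarrow\BQ$: since $\BQ$ is divisible and $Q$ is finite, $H^2(Q,\BQ)=0$, so the pushed-out central extension $1\to\BQ\to E\to Q\to 1$ splits as $E\cong\BQ\times Q$. The canonical map $G^+\to E$ has trivial kernel (the kernel of $\BZ\hookrightarrow\BQ$), so it embeds $G^+\hookrightarrow\BQ\times Q$. Because the torsion subgroup of $\BQ\times Q$ is exactly $\{0\}\times Q$, we obtain $T=G^+\cap(\{0\}\times Q)$, which is automatically a subgroup and finite; moreover $G^+/T$ embeds into $(\BQ\times Q)/(\{0\}\times Q)\cong\BQ$ as a subgroup containing $N\cong\BZ$ with finite index, hence $G^+/T\cong\BZ$. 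It is this embedding step that is not obvious a priori, since closure of torsion elements under multiplication in $G^+$ is exactly what the embedding into $\BQ\times Q$ makes manifest.

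To finish, $T$ is the torsion subgroup of $G^+$, therefore characteristic in $G^+$ and normal in $G$. If $G=G^+$, then $G/T\cong\BZ$ and we are in case (b). Otherwise $[G:G^+]=2$ and $G/T$ fits into an extension $1\to\BZ\to G/T\to\BZ/2\to 1$ in which $\BZ/2$ acts on $\BZ$ by inversion, because its action on the finite-index subgroup $N$ was inversion and the only compatible automorphism of $\BZ$ is $-1$. A direct computation gives $H^2(\BZ/2,\BZ_{\mathrm{sign}})=0$, so this extension splits and $G/T\cong D_\infty$, yielding case (c). Finally, for any finite normal subgroup $F'\triangleleft G$, its image $F'T/T$ is a finite normal subgroup of $G/T\in\{\BZ,D_\infty\}$ and hence trivial; therefore $F'\subseteq T$, so $F:=T$ is the unique maximal normal finite subgroup of $G$.
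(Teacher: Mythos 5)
Your proof is correct. The one point that genuinely needs care --- that the torsion elements of the index-$\le 2$ subgroup $G^{+}$ form a \emph{finite subgroup} --- is handled cleanly by the pushout along $N\cong\BZ\hookrightarrow\BQ$: since $H^2(Q,\BQ)=0$ for $Q$ finite, you get an embedding $G^{+}\hookrightarrow\BQ\times Q$, which makes the torsion set visibly a finite subgroup and identifies $G^{+}/T$ with a finitely generated (hence infinite cyclic) subgroup of $\BQ$. The remaining steps (the action on $\BZ$ must be inversion, $H^2(\BZ/2,\BZ_{\mathrm{sign}})=0$ forces $G/T\cong D_{\infty}$, and maximality of $T$ because $\BZ$ and $D_{\infty}$ have no nontrivial finite normal subgroups) all check out. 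Note that the paper does not prove this lemma at all; it only cites \cite[Proposition 4]{JuanPinedaLeary}, where the standard route goes through the structure theory of two-ended groups (ends and the Stallings--Wall theorem) rather than your elementary central-extension argument. What your approach buys is a short, self-contained proof using only the normal core, basic group cohomology of finite cyclic groups, and divisibility of $\BQ$; what the classical approach buys is a statement that sits inside a more general classification (groups with two ends) and avoids cohomology entirely. Either way, your argument is complete as written.
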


\begin{lem} \label{BVCfinitesubgroup}\cite[Lemma 1.7]{vPW1}
If a group $G$ has BVC, then $G$ has finitely many conjugacy classes of finite subgroups. In particular, the order of finite subgroups in $G$ is bounded.
\end{lem}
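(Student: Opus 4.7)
The plan is to use BVC to reduce the statement to the internal structure of each ``representative'' virtually cyclic subgroup supplied by the definition, and then apply the classification of Lemma \ref{vcstru}. First, since every finite subgroup of $G$ is in particular virtually cyclic, the BVC hypothesis gives virtually cyclic subgroups $V_1,\ldots,V_n$ such that every finite subgroup $H \leq G$ is $G$-conjugate to a finite subgroup of some $V_i$. Thus it suffices to show that each $V_i$ has only finitely many $V_i$-conjugacy classes of finite subgroups; taking the (finite) union of representatives over all $i$ yields finitely many $G$-conjugacy classes in $G$, and boundedness of orders follows since only finitely many orders occur among finitely many classes.

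Next, for each $i$ apply Lemma \ref{vcstru} and let $F_i$ be the unique maximal normal finite subgroup of $V_i$. If $V_i = F_i$, then $V_i$ itself has only finitely many subgroups. If $V_i/F_i \cong \BZ$, then any finite subgroup $H \leq V_i$ has torsion image in $\BZ$, forcing $H \leq F_i$; again there are only finitely many possibilities. In the nonorientable case $V_i/F_i \cong D_\infty$, a finite subgroup $H \leq V_i$ maps to a finite subgroup of $D_\infty$, hence either into the trivial subgroup (so $H \leq F_i$) or onto a subgroup generated by a reflection, in which case $[H : H \cap F_i] = 2$ and $|H| \leq 2|F_i|$.

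The main obstacle is to conclude finiteness of $V_i$-conjugacy classes of finite subgroups in the nonorientable case, since having bounded order alone is not quite enough. The cleanest route I foresee is Bass--Serre theory: in this case $V_i$ acts cocompactly on a line with finite vertex and edge stabilizers (it is a free product with amalgamation of two finite groups over $F_i$), so every finite subgroup of $V_i$ is $V_i$-conjugate into one of the two vertex stabilizers, each of which contains only finitely many subgroups. Assembling the three cases yields finitely many conjugacy classes of finite subgroups in $G$, and the ``in particular'' statement is immediate.
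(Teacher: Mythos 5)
Your proof is correct and is essentially the standard argument: the paper itself gives no proof here (it only cites \cite[Lemma 1.7]{vPW1}), and the reduction via BVC to the three cases of Lemma \ref{vcstru}, with the Bass--Serre (amalgam over $F_i$) argument handling conjugacy classes in the nonorientable case, is exactly the expected route. The only cosmetic alternative would be to note that every infinite virtually cyclic group acts properly and cocompactly on $\BR$ with finite stabilizers, which packages your orientable and nonorientable cases into one step, but this is the same idea.
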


\begin{lem} \cite[Lemma 5.6]{KMN2011} \label{finiteindexsubgroupbvc}
If a group $G$ has BVC, then any finite index subgroup also has BVC.
\end{lem}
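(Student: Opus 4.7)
The plan is to exhibit an explicit finite set of virtually cyclic subgroups of $H$ that witnesses BVC for $H$, built from the given witnesses $\{V_1,\ldots,V_n\}$ for $G$ together with a finite set of coset representatives of $H$ in $G$.

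First, fix BVC witnesses $\{V_1,\ldots,V_n\}$ for $G$, and write a double coset or left coset decomposition $G=\bigsqcup_{j=1}^{k}g_j H$, where $k=[G:H]<\infty$. For every pair $(i,j)$, set $W_{i,j}:=g_j^{-1}V_i g_j\cap H$. This is a subgroup of the virtually cyclic group $g_j^{-1}V_ig_j$, hence is itself virtually cyclic. Since there are only $nk$ such pairs, $\{W_{i,j}\}$ is a finite collection of virtually cyclic subgroups of $H$. I claim it witnesses BVC for $H$.

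To verify, let $V\leq H$ be any virtually cyclic subgroup. Viewed inside $G$, BVC gives $g\in G$ and some $i$ with $gVg^{-1}\subseteq V_i$, i.e.\ $V\subseteq g^{-1}V_i g$. Write $g=g_j h_0$ with $h_0\in H$. Then $g^{-1}V_i g=h_0^{-1}(g_j^{-1}V_i g_j)h_0$, so $h_0 V h_0^{-1}\subseteq g_j^{-1}V_i g_j$. Because $V\leq H$ and $h_0\in H$, the subgroup $h_0 V h_0^{-1}$ lies in $H$, and therefore in $g_j^{-1}V_i g_j\cap H=W_{i,j}$. Thus $V$ is $H$-conjugate (via $h_0$) to a subgroup of $W_{i,j}$, proving BVC for $H$.

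There is no real obstacle here; the only conceptual point is the splitting $g=g_j h_0$, which turns an arbitrary $G$-conjugation into an $H$-conjugation followed by a conjugation by one of finitely many fixed elements $g_j$. Virtual cyclicity of each $W_{i,j}$ is automatic from being a subgroup of $g_j^{-1}V_i g_j$, and finiteness of the witness family is immediate from $[G:H]<\infty$ and $n<\infty$.
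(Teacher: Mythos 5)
Your proof is correct: the witnesses $W_{i,j}=g_j^{-1}V_ig_j\cap H$ are virtually cyclic as subgroups of virtually cyclic groups, and the factorization $g=g_jh_0$ correctly converts the $G$-conjugacy into an $H$-conjugacy into one of the finitely many $W_{i,j}$. The paper gives no proof of its own (it cites \cite[Lemma 5.6]{KMN2011}), and your argument is essentially the standard one found there.
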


The following notion was introduced by the authors in \cite[Definition 1.10]{vPW2}
\begin{defn} \label{defn-bfamily}
Let $\family F$ be a family of subgroups. For a natural number $n \geq 1$, we say that a group $G$ has property $n \mathcal F$ if there are $H_1, \ldots, H_n \in \mathcal F$ such that any cyclic subgroup of $G$ is contained in a conjugate of $H_i$ for some $i$. We say that $G$ has $b \family{F}$ if $G$ has $n \family{F}$ for some $n \in \BN$. 
\end{defn}

We are mostly interested in $b \VCYC$ as both $b\CYC$ and BVC imply $b \VCYC$, which leads to unified proofs when we deal with finiteness properties of $E_{\CYC}(G)$ and $\uu EG$. The three notions  generally do not coincide \cite[Example 1.11,1.12]{vPW2}.

\begin{lem}\label{bfquotient}\cite[Lemma 1.13]{vPW2}
Suppose the family $\family{F}$ is closed under quotients. If $\pi \colon G \to Q$ is an epimorphism and $G$ has $b \family{F}$, then $Q$ has $b \family{F}$.
\end{lem}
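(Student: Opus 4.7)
The plan is to take the witnessing subgroups for $G$ having $n\mathcal{F}$ and push them forward to $Q$. Since a family is by definition closed under subgroups, and the hypothesis gives us closure under quotients as well, the images in $Q$ of the chosen subgroups will remain in $\mathcal{F}$, and lifting cyclic subgroups from $Q$ to $G$ transports the covering property.

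More concretely, suppose $G$ has $n\mathcal{F}$ witnessed by $H_1, \ldots, H_n \in \mathcal{F}$. Set $K_i := \pi(H_i) \leq Q$. Each $K_i$ is isomorphic to $H_i / (H_i \cap \ker \pi)$, hence is a quotient of $H_i$, and by assumption $\mathcal{F}$ is closed under quotients, so $K_i \in \mathcal{F}$. The key step is then the following: given a cyclic subgroup $C = \langle c \rangle \leq Q$, pick any preimage $g \in G$ with $\pi(g) = c$. Then $\langle g \rangle$ is cyclic in $G$, so by the $n\mathcal{F}$ property there exist an index $i$ and an element $x \in G$ with $\langle g \rangle \leq x H_i x^{-1}$. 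Applying $\pi$ gives
\[
C = \pi(\langle g \rangle) \leq \pi(x H_i x^{-1}) = \pi(x) K_i \pi(x)^{-1},
\]
so $C$ is contained in a conjugate of $K_i$ in $Q$. Therefore $K_1, \ldots, K_n$ witness $n\mathcal{F}$ for $Q$, and in particular $Q$ has $b\mathcal{F}$.

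There is no real obstacle here; the only thing to be mildly careful about is the justification that $K_i \in \mathcal{F}$, which is exactly where the quotient-closure hypothesis on $\mathcal{F}$ is used (membership in a family alone would only give closure under subgroups and conjugation, neither of which suffices after applying a non-injective $\pi$). Everything else is a direct translation of the covering statement from $G$ to $Q$ via a set-theoretic lift of a generator.
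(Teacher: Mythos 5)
Your proof is correct and is the standard argument for this lemma (the paper itself only cites \cite[Lemma 1.13]{vPW2} without reproducing the proof, and the cited proof proceeds exactly this way): push the witnesses forward, use quotient-closure of $\family F$ to keep $\pi(H_i)$ in the family, and lift a generator of a cyclic subgroup of $Q$ to transport the covering property. The one point requiring care, that $\pi(H_i)\in\family F$ needs the quotient-closure hypothesis and not just the subgroup/conjugation closure built into the notion of a family, is exactly the point you flag.
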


\begin{lem}\label{finite-index-bf}\cite[Lemma 1.14]{vPW2}
Let $K \subgroup G$ be a finite index subgroup and suppose $G$ has $b \family F$. Then $K$ also has $b \family F$.
\end{lem}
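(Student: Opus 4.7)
The plan is to pass from a finite family witnessing $b\family{F}$ for $G$ to one witnessing $b\family{F}$ for $K$, by cutting along a transversal of $K$ in $G$. Let $H_1,\dots,H_n\in\family{F}$ be subgroups such that every cyclic subgroup of $G$ lies in a $G$-conjugate of some $H_i$, and fix representatives $s_1,\dots,s_r$ for the right cosets $K\backslash G$, where $r=[G:K]$ is finite. Since $\family{F}$ is closed under conjugation and under taking subgroups, each of the $nr$ subgroups
\[
H'_{i,j} := (s_j H_i s_j^{-1})\cap K
\]
belongs to $\family{F}$ and is contained in $K$; I propose these as a witness for $b\family{F}$ in $K$.

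To verify this, take any cyclic subgroup $C \subgroup K$. Viewing $C$ as a cyclic subgroup of $G$, we obtain $g\in G$ and $i$ with $C \subgroup g H_i g^{-1}$. Writing $g = k s_j$ with $k\in K$ and a unique $j\in\{1,\dots,r\}$, the inclusion becomes $k^{-1}Ck \subgroup s_j H_i s_j^{-1}$. Since the left-hand side is automatically a subgroup of $K$, it must in fact be contained in $H'_{i,j}$, so $C$ is a $K$-conjugate of a subgroup of $H'_{i,j}$. This shows that $K$ has property $(nr)\family{F}$, hence $b\family{F}$.

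The argument hides no real obstacle: the only inputs are finiteness of the index together with the conjugation- and subgroup-closure of $\family{F}$. The only small care needed is to work with the \emph{right} coset decomposition $G=\bigsqcup_j K s_j$, so that conjugation by $g = k s_j$ factors cleanly as the fixed conjugation by $s_j$ followed by conjugation by $k\in K$, keeping the $K$-part absorbed into the ambient $K$-conjugacy.
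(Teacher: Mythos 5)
Your proof is correct, and it is the standard transversal argument (intersect the $s_j$-conjugates of the witnesses with $K$ and absorb the $K$-part of $g=ks_j$ into a $K$-conjugation), which is exactly the route taken in the cited Lemma 1.14 of \cite{vPW2}; the key closure properties of $\family F$ under conjugation and subgroups are invoked in the right places. Nothing further is needed.
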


\begin{thm}\label{bVCYC-thm}
Let $G$ be a finitely generated group in one of the following classes
\begin{enumerate}
    \item \label{bVCYC-solvable} virtually solvable groups,
    \item \label{bVCYC-one-relator} one-relator groups,
    \item \label{bVCYC-acy-hyperbolic} acylindrically hyperbolic groups,
    \item 3-manifold groups,
    \item CAT(0) cube groups,
    \item \label{linear-bvc} linear groups,
    \item \label{npc-manifold} groups acting properly and discontinuously on a Hadamard manifold via isometries such that the quotient has finite volume.
\end{enumerate}
If $G$ has $b \VCYC$, then $G$ is virtually cyclic. 
\end{thm}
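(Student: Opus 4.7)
The plan is to prove the theorem class by class, with a common thread running through each case. The theorem is largely a collation of results spread across \cite{GW2013,KMN2011,vPW2,vPW1}; most of them were originally formulated for BVC, but the arguments extend verbatim to $b\VCYC$ once one invokes the two closure lemmas \Cref{finite-index-bf} and \Cref{bfquotient}. Our strategy in each case is to assume $G$ is \emph{not} virtually cyclic and locate, inside $G$ or inside a controlled subquotient, either $\BZ^2$ or a non-abelian free subgroup $F_2$. Either suffices to obstruct $b\VCYC$: any virtually cyclic subgroup of $\BZ^2$ (resp.\ $F_2$) is infinite cyclic, and finitely many infinite cyclic subgroups can neither cover the primitive cyclic subgroups $\langle(a,b)\rangle$ of $\BZ^2$ (a density argument in $\BP^1(\BQ)$) nor exhaust the conjugacy classes of primitive elements of $F_2$ (Nielsen/length arguments). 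Having this, \Cref{finite-index-bf} and \Cref{bfquotient} propagate the failure back to $G$.

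For \eqref{bVCYC-solvable}, after passing to a finite-index torsion-free solvable subgroup and using induction on Hirsch length, one obtains either a $\BZ^2$-section or a Baumslag--Solitar subquotient; both obstruct $b\VCYC$ through \Cref{bfquotient}. For \eqref{bVCYC-one-relator}, the Magnus--Karrass--Solitar structure of one-relator groups produces $\BZ^2$ or $F_2$ inside any non-virtually-cyclic example. For \eqref{bVCYC-acy-hyperbolic} the argument is self-contained: a non-elementary acylindrically hyperbolic group contains infinitely many pairwise non-conjugate loxodromic elements generating pairwise non-conjugate maximal virtually cyclic subgroups, and since every loxodromic lies in a unique maximal virtually cyclic overgroup this contradicts $b\VCYC$ directly. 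For the 3-manifold case, geometrization reduces to Seifert-fibered, hyperbolic, and graph-manifold pieces, each of which is either virtually cyclic or falls under one of the previous cases. For CAT(0) cube groups, Caprace--Sageev rank rigidity (or the Sageev construction) gives a $\BZ^2$ or $F_2$ subgroup in the non-virtually-cyclic case.

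For \eqref{linear-bvc}, the Tits alternative says a finitely generated linear group is either virtually solvable (reducing to \eqref{bVCYC-solvable}) or contains $F_2$; in the second case we need to rule out $F_2$ embedding in a way that survives $b\VCYC$, which is done by separating conjugacy classes of cyclic subgroups via eigenvalue/trace data in a faithful representation. For \eqref{npc-manifold}, the flat torus theorem combined with Margulis' lemma provides $\BZ^2$ as soon as $G$ is not virtually cyclic, and one extracts a finite-index or normal copy of $\BZ^2$ using the Bieberbach/soluble structure of the thin part so that \Cref{finite-index-bf} and \Cref{bfquotient} apply.

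The main obstacle is not any single case but the bookkeeping required to propagate failure of $b\VCYC$ from a ``bad'' subgroup $H \leq G$ back to $G$: unlike BVC, $b\VCYC$ is not obviously inherited by arbitrary subgroups, so one must arrange that $H$ is either of finite index or arises as the image of a finite-index subgroup in a quotient where $\VCYC$ is preserved. In cases \eqref{bVCYC-solvable}, \eqref{bVCYC-one-relator}, and the 3-manifold case this is straightforward from the structure theory; in \eqref{bVCYC-acy-hyperbolic} and the CAT(0) cube case the maximal-virtually-cyclic argument sidesteps the inheritance issue altogether; in \eqref{linear-bvc} and \eqref{npc-manifold} the careful choice of which $\BZ^2$ or $F_2$ to use is where the most attention is needed.
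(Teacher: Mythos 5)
Your proposal diverges from the paper in scope---the paper proves only item \eqref{npc-manifold} here and simply cites \cite{vPW2} for items (a)--(f)---but more importantly it has a genuine gap at its core. The ``common thread'' you propose, namely that locating $\BZ^2$ or a non-abelian free subgroup $F_2$ inside $G$ obstructs $b\VCYC$, is false as stated. Like BVC, the property $b\VCYC$ does not pass to arbitrary subgroups: the covering of cyclic subgroups is by conjugates taken in the ambient group $G$, so even if $H \cong \BZ^2 \subgroup G$, all cyclic subgroups of $H$ could a priori be $G$-conjugate into finitely many virtually cyclic subgroups of $G$. Your density argument in $\BP^1(\BQ)$ only rules out a covering by finitely many cyclic subgroups up to conjugation \emph{inside} $\BZ^2$, which is not what $b\VCYC$ asserts. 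The only propagation tools available are \Cref{finite-index-bf} (finite-index subgroups) and \Cref{bfquotient} (quotients), and a $\BZ^2$ or $F_2$ subgroup is in general neither of finite index nor realizable as the image of a finite-index subgroup in a quotient; acknowledging this at the end and calling it ``bookkeeping'' does not close the gap. This is precisely why handling CAT(0) groups containing $\BZ^2$ requires the nontrivial translation-length and lattice-point-counting argument of Section 4 (and of \cite{vPW1}), and why the acylindrically hyperbolic case is handled not by ``containing $F_2$'' but by exhibiting infinitely many loxodromic elements that are pairwise non-commensurable up to conjugacy, each lying in a unique maximal elementary subgroup---your sketch of case \eqref{bVCYC-acy-hyperbolic} is the one that is essentially correct.

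For item \eqref{npc-manifold}, the only case the paper actually proves, your argument is also factually wrong: a cocompact lattice in real hyperbolic space acts properly and discontinuously with finite covolume on a Hadamard manifold, is not virtually cyclic, and contains no $\BZ^2$, so ``the flat torus theorem combined with Margulis' lemma provides $\BZ^2$'' fails outright. The paper instead invokes Ballmann's rank rigidity theorem: either $M$ has rank one, in which case $G$ contains a rank-one isometry, is acylindrically hyperbolic, and case \eqref{bVCYC-acy-hyperbolic} applies; or $M$ is a symmetric space, in which case $G$ surjects onto a linear group with finite kernel and case \eqref{linear-bvc} applies; or $M$ splits as a nontrivial Riemannian product, in which case one passes to a finite-index subgroup preserving the splitting (using \Cref{finite-index-bf}) and projects onto the isometry group of a rank-one factor (using \Cref{bfquotient}), where a rank-one isometry again exists. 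This reduction, not a search for $\BZ^2$, is the route you would need to take.
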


\Proof We only need to prove the last item \cite[Theorem 1.15, Theorem 2.11]{vPW2}. This follows from the rank rigidity theorem \cite[Theorem C,~p.6]{Ba95} for non-positively curved manifolds. In fact, let $M$ be the Hadamard manifold described in (g). By \cite[Theorem C, ~p.6]{Ba95}, we have that either $M$ is rank one or a symmetric space, or a non-trivial Riemannian product. If $M$ is rank one, by \cite[Theorem B(iii),~p.5]{Ba95}, $G$ contains a rank one isometry, hence $G$ does not have $b\VCYC$ as it is acylindrically hyperbolic \cite{Sis15}. If $M$ is a symmetric space, since $G$ acts properly on $M$, we have that $G$ surjects onto a linear group with finite kernel. But a linear group has $b\VCYC$ if and only if it is virtually cyclic. Hence if $G$ has $b \VCYC$, it is virtually cyclic. Now we assume that $M$ is a Riemannian product. If $M$ is a product of symmetric spaces, then we are again done by the same reasoning as above. Thus, by rank rigidity, we can assume that $M = M_1\times M_2\times \cdots \times M_n\times S$, such that each $M_i$ is of rank one and $S$ is a product of symmetric spaces.  Now up to replacing $G$ by a finite index subgroup, we can assume that $G$ preserves the product. Note that $G$ still has $b \VCYC$ by \Cref{finite-index-bf}. Now $G$ maps to the isometry group of $M_1$, and we denote the image by $G_1$. Note that $G_1$ acts on $M_1$ properly and discontinuously with finite covolume. Applying \cite[Theorem B(iii),p.5]{Ba95} again, we have that $G_1$ contains a rank one isometry. Thus $G_1$ does not have $b\VCYC$. By \Cref{bfquotient}, $G$ also does not have $b\VCYC$.

\qed

\begin{rem}\label{EA-rem}
\ref{ConjA} is also known for elementary amenable groups \cite[Corollary 5.8]{KMN2011}, but we do not know whether an  elementary amenable group has $b \VCYC$ (or BVC) if and only if it is virtually cyclic.
\end{rem}

\begin{cor}\label{BVChomology} If $G$ has $b\VCYC$ and surjects onto a finitely generated group $Q$ that lies in one of the classes described in \Cref{bVCYC-thm}, then $Q$ is virtually cyclic. In particular, the abelianization $H_1(G, \BZ)$ is finitely generated of rank at most one.
\end{cor}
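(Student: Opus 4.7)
The corollary has two assertions, and the second will be deduced from the first by applying it to the abelianization map.

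For the first assertion, the plan is a direct invocation of \Cref{bfquotient}. The family $\VCYC$ is closed under quotients, since any quotient of a virtually cyclic group is virtually cyclic (the image of a finite-index cyclic subgroup is a finite-index cyclic subgroup of the quotient). Hence if $\pi\colon G \twoheadrightarrow Q$ is a surjection and $G$ has $b\VCYC$, then $Q$ has $b\VCYC$. Combined with the hypothesis that $Q$ is finitely generated and belongs to one of the classes listed in \Cref{bVCYC-thm}, this forces $Q$ to be virtually cyclic.

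For the second assertion, I would apply the first assertion to the abelianization map $G \twoheadrightarrow H_1(G,\BZ) = G/[G,G]$. Since abelian groups are virtually solvable, they satisfy class (a) of \Cref{bVCYC-thm}, so the only thing that needs verification before applying the first part is that $H_1(G,\BZ)$ is finitely generated. By \Cref{bfquotient}, the abelianization has $b\VCYC$, so there exist finitely many virtually cyclic subgroups $V_1,\ldots,V_n$ of $H_1(G,\BZ)$ whose conjugates cover every cyclic subgroup; since $H_1(G,\BZ)$ is abelian, conjugation is trivial, and hence $H_1(G,\BZ) = V_1 \cup \cdots \cup V_n$ as sets. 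Each $V_i$ is finitely generated (virtually cyclic groups always are), so the union of a finite generating set of each $V_i$ generates $H_1(G,\BZ)$.

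With finite generation established, the first part of the corollary shows that $H_1(G,\BZ)$ is virtually cyclic. A finitely generated abelian group is virtually cyclic if and only if its free rank is at most one, which is the desired conclusion. The only mildly subtle point is the finite-generation step, which is why \emph{abelian} $b\VCYC$-groups are automatically finitely generated; everything else is bookkeeping around \Cref{bfquotient} and \Cref{bVCYC-thm}.
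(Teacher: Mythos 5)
Your proof is correct and follows exactly the route the paper intends: \Cref{bfquotient} transfers $b\VCYC$ to the quotient (since $\VCYC$ is closed under quotients), and \Cref{bVCYC-thm} then forces the finitely generated quotient to be virtually cyclic. Your observation that an abelian $b\VCYC$-group is a finite union of virtually cyclic subgroups and hence finitely generated correctly supplies the one detail the paper leaves implicit before applying class (a) of \Cref{bVCYC-thm} to $H_1(G,\BZ)$.
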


\section{Artin groups}

In this section, we prove that an Artin group has $b \VCYC$ if and only if it is virtually cyclic.

\subsection{Basics about Artin groups}

We give a quick definition of Artin groups, see \cite{Mc} for more information about Artin groups. An Artin group (or generalized braid group) $A$ is a group with a presentation of the form
$$\langle x_1,x_2,\cdots, x_n \mid \langle x_1,x_2\rangle^{m_{1,2}} =\langle x_2,x_1\rangle^{m_{2,1}},\cdots,\langle x_{n-1},x_{n}\rangle^{m_{n-1,n}} =\langle x_{n},x_{n-1}\rangle^{m_{n,n-1}} \rangle \eqncomma $$

where $m_{i,j}=m_{j,i}\in\{2,3,\cdots,\infty\},i<j$, and for $m_{i,j}\in\{2,3,\cdots\}$, $\langle x_i,x_j\rangle^{m_{i,j}}$ denotes an alternating product of $x_i$ and $x_j$ of length $m$, beginning with $x_i$. For example $\langle x_1,x_3\rangle^3 =x_1x_3x_1$. When $m_{i,j}=\infty$, there is (by convention) no relation for $x_i$ and $x_j$.

This data can be encoded by a Coxeter diagram; this is a labelled graph with $n$ vertices
$v_1,\cdots, v_n$, where two vertices $v_i$ and $v_j$ are connected by an edge if $m_{i,j}\geq  3$ and edges
are labeled by $m_{i,j}$ whenever $m_{i,j} \geq 4$.

Given an Artin group with the above presentation, one further obtains a Coxeter group $C$ by adding the relation $x_i^2 =1$ for all $i$. We say that the Artin group $A$ is of spherical type if
the associated Coxeter group $W$ is finite. When the Coxeter diagram associated to the Artin group is connected, the groups $A$ and $W$ are said to be irreducible, otherwise reducible.

\subsection{The $b\VCYC$ property for Artin groups}

We first deal with the case when the Coxeter group corresponding to the Artin group is virtually cyclic. 

\begin{prop}
Let $A$ be an Artin group and let $W$ be its corresponding Coxeter group. If $W$ is infinite virtually cyclic, then $A$ does not have $b \VCYC$ .
\end{prop}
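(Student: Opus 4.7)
The plan is to classify the Coxeter diagrams whose Coxeter group $W$ is infinite virtually cyclic, and then exploit the resulting direct-product decomposition of the Artin group $A$. Any Coxeter group $W$ splits as the direct product of the Coxeter groups corresponding to the connected components of its Coxeter diagram (the absence of an edge between two vertices means $m_{ij}=2$, so the corresponding generators commute); the same decomposition holds on the Artin group side. Hence $W$ is infinite virtually cyclic if and only if its diagram has exactly one component whose irreducible Coxeter group is infinite virtually cyclic and all remaining components have finite Coxeter group.

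Next I would argue that the only irreducible infinite virtually cyclic Coxeter group is the infinite dihedral group $D_\infty$, arising from the diagram $\tilde{A}_1$ (two vertices joined by an edge labeled $\infty$). By the classical classification of irreducible Coxeter groups, an irreducible infinite Coxeter group is either affine of some rank $n \geq 1$ (hence virtually $\BZ^n$) or contains a non-abelian free subgroup; the former is virtually cyclic precisely when $n = 1$, which is $\tilde{A}_1 = D_\infty$.

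Consequently, the corresponding Artin group takes the form $A \cong F_2 \times A_0$, where $F_2$ (the Artin group of $\tilde{A}_1$, since $m=\infty$ gives no defining relation between the two generators) is the free group of rank two and $A_0$ is a spherical (finite-type) Artin group. Projecting onto the $F_2$-factor gives an epimorphism $A \twoheadrightarrow F_2$. Since $F_2$ is hyperbolic and not virtually cyclic, \Cref{bVCYC-thm} (via the acylindrically hyperbolic case, or equivalently the linear case) shows that $F_2$ does not have $b\VCYC$. Applying \Cref{bfquotient} to the projection then forces $A$ itself to fail $b\VCYC$, as desired.

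The main obstacle is the classification step, which requires the affine/hyperbolic dichotomy for irreducible infinite Coxeter groups; once $A \cong F_2 \times A_0$ is established, the rest is immediate from the machinery already developed in \Cref{bfquotient} and \Cref{bVCYC-thm}.
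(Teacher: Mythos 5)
Your proof is correct, but it takes a genuinely different route from the paper's. The paper argues entirely through abelianizations: by \Cref{vcstru}, an infinite virtually cyclic $W$ has $W/F \cong \BZ$ or $\BZ/2 * \BZ/2$ for its maximal finite normal subgroup $F$; since $W^{\abel}$ is an elementary abelian $2$-group, the case $W/F \cong \BZ$ is impossible, so $W^{\abel}$ has at least two $\BZ/2$ factors, whence $A^{\abel}$ contains $\BZ^2$ and \Cref{BVChomology} applies. This is elementary and needs no classification results. You instead invoke the finite/affine/indefinite trichotomy for irreducible Coxeter groups to pin down $\tilde{A}_1$ as the unique irreducible infinite virtually cyclic type, deduce $A \cong F_2 \times A_0$, and then apply \Cref{bVCYC-thm} and \Cref{bfquotient} to the projection onto $F_2$. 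Your input is heavier (the affine/indefinite dichotomy, i.e.\ that a non-affine infinite irreducible Coxeter group contains a nonabelian free group, is a real theorem, not a routine computation), but it buys a stronger structural conclusion -- an explicit surjection of $A$ onto $F_2$ -- rather than just rank-$2$ abelianization. One small point worth making explicit in your write-up: the direct-product reduction (exactly one component infinite virtually cyclic, the rest finite) uses that a direct product of two infinite groups contains $\BZ^2$ and hence is not virtually cyclic; this is easy but should be said. Both arguments are valid; the paper's is shorter and self-contained given its \Cref{BVChomology}.
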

\begin{proof} By Lemma \ref{vcstru}, we have a unique maximal finite normal subgroup $F$ of $W$ such that $W / F$ is either isomorphic to $\BZ$ or $\BZ/2\ast\BZ/2$. On the other hand, one easily calculates that  $W/[W,W]$ is a direct product of copies of $\BZ/2$, hence $W / F$ must be isomorphic to $\BZ/2\ast\BZ/2$ in this case. This means that $W/[W,W]$ has at least two copies of $\BZ/2$. This further shows that $A/[A,A]$ contains a copy of $\BZ^2$. Now by \Cref{BVChomology}, it follows that $A$ does not have $b \VCYC$ .
\end{proof}

\textbf{Proof of  \ref{thma}}.  Since Coxeter groups are linear, we know that a Coxeter group has $b \VCYC$ if and only if it is virtually cyclic by \Cref{bVCYC-thm}. Hence we are left to consider the case when the Coxeter group $H$ is finite by \Cref{bfquotient}. In this case, $A$ is an Artin group of spherical type. Calvez and  Wiest showed that in the case that $A$ is an irreducible Artin group of spherical type and it is not right angled,  then $A/Z(A)$ is acylindrically hyperbolic \cite[Theorem 1.3]{CW16}. Since RAAGs are CAT(0) cube groups, by \Cref{bVCYC-thm} and \Cref{bfquotient}, it follows that irreducible Artin group of spherical type do not have $b\VCYC$ unless they are virtually cyclic. When $A$ is not irreducible, it  splits as a direct product of irreducible ones, in particular $H_1(A,\BZ)$ is a free abelian group of rank at least two, and thus reducible Artin groups do not have $b\VCYC$ by \Cref{BVChomology}.

\section{Conjugacy growth of Groups Acting on CAT(0)-Spaces}

In this section, we study the conjugacy growth of groups acting properly on CAT(0) spaces via semi-simple isometries. We will show that when such groups contain $\BZ^2$ as a subgroup, they have strictly faster than linear conjugacy growth. As a consequence, we show that a CAT(0) cube group has at most linear conjugacy growth if and only if it is virtually cyclic. Part of the ideas of the proof given here are drawn from our previous paper \cite[Section 4]{vPW1} where we studied the BVC property for CAT(0) groups.

\subsection{Quick review on Conjugacy growth} Let $G$ be a group with a finite symmetric generating set $S$. We define the word metric on $G$ as follows
$$d_S(g,h) =\min \{n \mid g^{-1}h=s_1s_2\cdots s_n,s_i\in S\}.$$
For any $g\in G$, we define the word length of $g$ via 
$$|g|_S = d_S(e,g),$$
where $e$ is the identity element of $G$. Now given $n>0$, we denote by $B_n(G,S)$ the ball of radius $n$ around the identity element with respect to the word metric. The word growth function is the function that maps $n>0$ to $|B_n(G,S)|$, i.e. the number of elements of distance at most $n$ from the identity.

Now given $n>0$, we can consider the conjugacy classes in $B_n(G,S)$ which we denote by $B_n^c(G,S)$. The conjugacy growth function $g_c(n)$ assigns to $n>0$ the number $|B_n^c(G,S)|$, i.e. the number of conjugacy classes in $G$ which intersect $B_n(G,S)$.  For $f,g \colon \BN \to \BN$, we write $f \preceq g$ if there is some constant $C \in \BN$ such that $f(n) \leq g(Cn)$ for all $n \in \BN$. If $f \preceq g$ and $g \preceq f$, we say that $f$ and $g$ are equivalent and write $f \sim g$. Under this equivalence relation, the growth function and the conjugacy growth function are independent of the choice of generating set. We say that a group has linear (resp. at most linear) conjugacy growth if $g_c(n) \sim n$ (resp. $g_c(n) \preceq n$), and we say that a group has exponential conjugacy growth if $g_c(n) \sim 2^n$ or equivalently if
$$\liminf_{n\rightarrow \infty} \frac{\log|B^c_n(G,S)|}{n} >0.$$
For more information about conjugacy growth, we refer to \cite{GS} and \cite{HuOs}.

\subsection{Conjugacy growth of groups acting on CAT(0) spaces} We now review some standard terminology from \cite{BH}. 

\begin{defn}\cite[II.6.1]{BH} Let $X$ be a metric space and let $g$ be an isometry of $X$. The \define{displacement function} of $g$ is the function $d_g \colon X \rightarrow \BR_+ =\{r\geq 0 \mid r\in \BR\}$ defined by $d_g(x) =d(g x,x)$. The \define{translation length} of $g$ is the number $|g|:= \inf \{d_g (x) \mid x\in X\}$. The set of points where $d_g$ attains this infimum will be denoted by $Min(g)$. More generally, if $G$ is a group acting by isometries on $X$, then $Min(G):=\bigcap_{g\in G} Min(g)$. An isometry~$g$ is called \define{semi-simple} if $Min(g)$ is non-empty. An action of a group by isometries of $X$ is called \define{semi-simple} if all of  its elements are semi-simple.
\end{defn}

The following theorem is known as the Flat Torus Theorem \cite[II.7.1]{BH}.
\begin{thm} \label{flattorusthm}
Let $A$ be a free abelian group of rank $n$ acting properly by semi-simple isometries on a CAT(0) space $X$. Then:

\begin{enumerate}
\item $Min(A) = \bigcap_{\alpha\in A} Min(\alpha)$ is non-empty and splits as a product $Y \times \BE^n$, here $\BE^n$ denotes $\BR^n$ equipped with the standard Euclidean metric.
\item Every element $\alpha \in A$ leaves $Min(A)$ invariant and respects the product decomposition; $\alpha$ acts as the identity on the first factor $Y$ and as a translation on the second factor $\BE^n$.
\item The quotient of each $n$-flat $Y \times \BE^n$ by the action of $A$ is an $n$-torus.
\end{enumerate}
\end{thm}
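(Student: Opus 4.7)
The plan is to prove the Flat Torus Theorem by induction on the rank $n$ of $A$, with the splitting theorem for a single hyperbolic isometry serving as the base case. Throughout I would rely on two standard facts from CAT(0) geometry \cite[II.6]{BH}: $(\star)$ for a semi-simple isometry $g$ with $|g|>0$, the set $\Min(g)$ is non-empty, closed, convex, and splits isometrically as $Y_g \times \BR$ with $g$ acting as $(\id, t_{|g|})$, where $t_c$ denotes translation by $c$; and $(\star\star)$ any isometry of a CAT(0) product $Z_1 \times Z_2$ preserving the product structure decomposes as a pair of isometries of the factors.

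A first observation: since $A \cong \BZ^n$ is torsion-free and the action is proper, every non-trivial $\alpha \in A$ has infinite order, so cannot fix a point; hence $|\alpha|>0$ for all $\alpha \neq 1$, meaning each such $\alpha$ is hyperbolic and $(\star)$ applies. A second observation: if $\alpha\beta = \beta\alpha$, then $d_\alpha(\beta.x) = d(\alpha\beta.x,\beta.x) = d(\beta\alpha.x,\beta.x) = d_\alpha(x)$, so $\beta$ preserves $\Min(\alpha)$; in particular, every element of $A$ leaves $\Min(\alpha')$ invariant for every $\alpha' \in A$.

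For the induction, the case $n=1$ is immediate from $(\star)$. For the inductive step, fix a decomposition $A = A' \oplus \BZ\alpha_n$ with $A'$ of rank $n-1$, and assume by induction that $\Min(A') = Y' \times \BE^{n-1}$, with $A'$ acting trivially on $Y'$ and as a full-rank translation lattice on $\BE^{n-1}$. Since $\alpha_n$ commutes with $A'$ it preserves $\Min(A')$, and by $(\star\star)$ decomposes as $(\beta,\tau)$ with $\beta \in \Isom(Y')$ and $\tau \in \Isom(\BE^{n-1})$; commutativity with the full-rank lattice of translations in $\BE^{n-1}$ forces $\tau$ to be a pure translation. Applying $(\star)$ to $\beta$ on $Y'$ yields $\Min(\beta) = Y \times \BR$ with $\beta$ acting as a translation on $\BR$, and intersecting gives $\Min(A) = Y \times \BE^n$ on which $A$ acts entirely by translations of the Euclidean factor, proving parts (a) and (b). Part (c) then follows because a proper translation action of $A \cong \BZ^n$ on $\BE^n$ realises $A$ as a discrete rank-$n$ subgroup of $\BR^n$, hence as a full lattice, so each $n$-flat $\{y\} \times \BE^n$ descends to an $n$-torus.

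The main obstacle will be the careful application of $(\star\star)$ in the inductive step: one must check that $\alpha_n$ genuinely respects the product decomposition $Y' \times \BE^{n-1}$, which comes from the canonicality of the CAT(0) product splitting together with the fact that $A'$ acts trivially on one factor and cocompactly by translations on the other. The subsidiary point that the Euclidean component $\tau$ is a pure translation rests on the standard fact that an affine isometry of Euclidean space commuting with a full-rank translation lattice must have trivial rotational part. Everything else in the argument reduces to bookkeeping of the inductive product decomposition.
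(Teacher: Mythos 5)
The paper does not prove this statement --- it is quoted directly from Bridson--Haefliger \cite[II.7.1]{BH} --- so there is no in-paper argument to compare against; your outline is the standard induction from that source, and I will assess it on its own terms.

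Two steps in your inductive step are genuinely incomplete. First, the reason you give for $\alpha_n$ respecting the splitting $\Min(A')=Y'\times\BE^{n-1}$, namely ``canonicality of the CAT(0) product splitting,'' is not a valid principle: product decompositions of CAT(0) spaces are highly non-unique ($\BE^2$ splits as $\BR\times\BR$ in infinitely many ways, and $Y'$ may itself contain a Euclidean factor that could a priori be mixed with $\BE^{n-1}$). The correct argument is that each fibre $\{y'\}\times\BE^{n-1}$ is the closed convex hull of the $A'$-orbit of any of its points (since $A'$ acts trivially on $Y'$ and as a full-rank translation lattice on $\BE^{n-1}$); as $\alpha_n$ centralizes $A'$ it permutes $A'$-orbits, hence permutes these fibres, and an isometry of a product carrying fibres to fibres splits as a pair of isometries \cite[I.5.3(4)]{BH}. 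Second, and more seriously, you apply $(\star)$ to the component $\beta$ on $Y'$ without verifying its hypotheses. Semi-simplicity of $\beta$ requires an argument (project a point of $\Min(\alpha_n)$ onto the closed convex $\alpha_n$-invariant set $\Min(A')$; the projection does not increase displacement, so $\Min(\alpha_n)\cap\Min(A')\neq\emptyset$). More importantly, nothing you wrote rules out $|\beta|=0$: if $\beta$ were elliptic, $\Min(\alpha_n)\cap\Min(A')$ would be $\Fix(\beta)\times\BE^{n-1}$ and the Euclidean factor would fail to grow. This is exactly where properness must enter: an elliptic $\beta$ would make all of $A\cong\BZ^n$ act on $\Fix(\beta)\times\BE^{n-1}$ through translations of $\BE^{n-1}$, and properness forces $A\to\BR^{n-1}$ to be injective with discrete image, which is impossible for $\BZ^n$. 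As written, your induction uses properness only to note that nontrivial elements are hyperbolic on $X$ and in part (c), which is not enough --- the theorem is false without properness (e.g.\ $\BZ^2$ acting freely and semi-simply on $\BR$ by two rationally independent translations has $\Min(A)=\BR$, not $Y\times\BE^2$). Both gaps are fixable along the lines indicated, after which your argument is the proof of \cite[II.7.1]{BH}.
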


It is clear that the translation length is invariant under conjugation, i.e. $|h g h^{-1}| = |g|$ for any $g,h \in G$. Moreover, for $g$ semi-simple, we have that $|g^n| = |n| \cdot |g|$ for any $n \in \BZ$, e.g. by the Flat Torus Theorem.

The following theorem is proven by Paul Bernays in his Ph.D. thesis \cite{Be12} which generalizes results of Landau--Ramanujan, see  the introduction of \cite{BMO} for more information. 

\begin{thm}\label{quadratic-form-growth}
Let $f(x,y) = ax^2 +bxy+cy^2$ be a primitive quadratic form over $\BZ$ with nonsquare
discriminant $D = b^2-4ac$, and suppose $f$ is positive in case it is definite. Let $B_f (n)$
be the number of positive integers less than or equal to $n$ which are representable by $f$. Then $B_f(n)$ grows asympotically as fast as $\frac{n}{\sqrt{\ln n}}$, i.e. the limit
$$\lim_{n\to \infty}  \frac{B_f(n)}{\frac{n}{\sqrt{\ln n}} }$$
exists and has a positive value.
\end{thm}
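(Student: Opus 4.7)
The plan is to study the Dirichlet series
\[
F(s) \;=\; \sum_{m \ge 1} \frac{a_m}{m^s} \eqncomma
\]
where $a_m = 1$ if the positive integer $m$ is represented by $f$ and $a_m = 0$ otherwise, so that $B_f(n) = \sum_{m \le n} a_m$. The strategy is to show that $F(s) = G(s)(s-1)^{-1/2}$ in a neighbourhood of $s = 1$ with $G$ holomorphic and $G(1) \neq 0$, and then apply the Selberg--Delange Tauberian theorem (or, in the spirit of the original thesis, the earlier Tauberian argument of Landau) to deduce
\[
B_f(n) \;\sim\; \frac{G(1)}{\Gamma(1/2)} \cdot \frac{n}{\sqrt{\ln n}} \eqnstop
\]

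To analyse $F(s)$, I would first replace \emph{representable by $f$} by \emph{representable by some form in the genus of $f$}: Gauss's theory of genera shows that forms in a common genus represent essentially the same integers, and the difference is of lower order. The advantage is that the set of integers represented by a given genus is cut out by purely local congruence conditions modulo the discriminant $D$, so the modified $a_m$ becomes multiplicative up to bounded corrections at the finitely many primes dividing $D$. Its Euler product can then be expressed in terms of Dirichlet $L$-functions attached to characters of the genus (or ring class) group of $\BQ(\sqrt{D})$, and a careful bookkeeping gives the factorisation $F(s) = G(s)(s-1)^{-1/2}$ near $s = 1$. The exponent $1/2$ arises because, by Chebotarev's theorem applied to the ring class field, only a proper positive fraction $1/g$ of the rational primes is represented by the principal genus, where $g$ is the number of genera; this ``half-density'' of represented primes is exactly what produces the square-root branch point.

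The main obstacle is the genus-theoretic reduction. For $f = x^2 + y^2$ (class number one) the local criterion is the classical elementary statement that an odd prime $p$ is represented iff $p \equiv 1 \pmod 4$, and the asymptotic already follows from Landau's work. For higher class number one has to distinguish classes inside a common genus via the ring class field of $\BQ(\sqrt{D})$, since the same integer need not be represented by every form in a genus. In the indefinite case $D > 0$ one must additionally control the infinite automorphism group of $f$ (solutions to the associated Pell equation) by passing to equivalence classes of representations via the theory of reduced forms. These technicalities are precisely the core content of Bernays's thesis.
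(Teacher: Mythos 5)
First, a remark on the comparison itself: the paper does not prove this theorem --- it is quoted from Bernays's 1912 thesis (via the introduction of [BMO]) and used as a black box in \Cref{lattice-plane-growth}. So your sketch can only be measured against the classical Landau--Bernays argument, and in its overall architecture it matches that argument: form the Dirichlet series of the indicator of represented integers, establish a factorisation $F(s)=G(s)(s-1)^{-1/2}$ near $s=1$ with $G$ holomorphic and nonvanishing, and finish with a Tauberian theorem; the reduction from a single class to a genus and the treatment of the infinite automorph group in the indefinite case are indeed the technical core of Bernays's work, as you say.

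There is, however, one concretely wrong step, and it is the step that is supposed to produce the exponent $1/2$. You attribute the square-root branch point to the fact that only a proper fraction of the rational primes is represented by the genus of $f$. By Chebotarev in the ring class field, the primes represented by a fixed genus have density $1/(2g)$, where $g$ is the number of genera; if the Euler product of $F(s)$ were really governed by this set of primes, you would obtain a singularity of type $(s-1)^{-1/(2g)}$ and hence $B_f(n)\asymp n(\ln n)^{1/(2g)-1}$, which disagrees with the asserted $n/\sqrt{\ln n}$ as soon as $g>1$. The point your sketch misses is that the integers represented by $f$ (or by its genus) are \emph{not} multiplicatively generated by the primes represented by $f$: for example $21=1^2+5\cdot 2^2$ is represented by $x^2+5y^2$ although neither $3$ nor $7$ is. The correct multiplicative constraint is that every prime $p\nmid D$ dividing $m$ to \emph{odd} order must satisfy $\left(\tfrac{D}{p}\right)=1$ --- a set of primes of density exactly $1/2$, independently of the class number and of $g$ --- while membership in a given genus is then decided by congruence conditions modulo $D$, which affect only the leading constant, not the exponent. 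With the Euler product organised around this condition the factorisation $F(s)=G(s)(s-1)^{-1/2}$ does hold, and the rest of your plan (Selberg--Delange, plus the genuinely hard single-class-versus-genus and Pell-automorph issues you correctly attribute to Bernays) goes through.
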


\begin{cor}\label{lattice-plane-growth}
Let $V_1,V_2$ be two linearly independent vectors in the plane $\BR^2$, and let $S(n) $ be the number of elements in
\begin{align*} 
\left\{ ||xV_1+yV_2|| \mid ||xV_1+yV_2||\leq n, x,y\in \BZ  \right\}
\end{align*}
Then $S(n)$ grows asympotically the same as $\frac{n^2}{\sqrt{\ln n}}$ or $n^2$.
\end{cor}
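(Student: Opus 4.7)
The plan is to translate $S(n)$ into a counting problem for the positive definite real binary quadratic form $f(x, y) := \|x V_1 + y V_2\|^2 = a x^2 + b xy + c y^2$, where $a = \|V_1\|^2$, $b = 2 \langle V_1, V_2 \rangle$, and $c = \|V_2\|^2$; positive definiteness follows from the linear independence of $V_1, V_2$. Then $S(n)$ equals the number of distinct values taken by $f$ on lattice points with $f(x, y) \leq n^2$. The dichotomy will be governed by $d := \dim_{\BQ}(\BQ a + \BQ b + \BQ c) \in \{1, 2, 3\}$.

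In the case $d = 1$, the coefficients $a, b, c$ all lie in a common $\BQ$-line, so after scaling by a suitable $\mu > 0$ one obtains a primitive positive definite integer binary quadratic form $f_0 = \mu f$. Its discriminant is strictly negative, hence nonsquare, so \Cref{quadratic-form-growth} applies and yields that the number of positive integers $\leq N$ represented by $f_0$ is asymptotic to $N/\sqrt{\ln N}$. Taking $N = \mu n^2$ gives $S(n) \sim n^2/\sqrt{\ln n}$.

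In the case $d \geq 2$, I plan to show $S(n) \sim n^2$. Fix a $\BQ$-basis $\alpha_1, \ldots, \alpha_d$ of $\BQ a + \BQ b + \BQ c$ and decompose uniquely $f = \sum_{i=1}^d \alpha_i f_i$, where each $f_i \in \BQ[x, y]$ is a nonzero binary quadratic form. Because the three coordinate vectors of $a, b, c$ in this basis span $\BQ^d$, the forms $f_1, \ldots, f_d$ are $\BQ$-linearly independent. The $\BQ$-linear independence of the $\alpha_i$ then forces any collision $f(x, y) = f(x', y')$ with $(x, y), (x', y') \in \BZ^2$ to satisfy $f_i(x, y) = f_i(x', y')$ for every $i$. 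I next select $\BQ$-linearly independent $g_1, g_2$ in the $\BQ$-span of the $f_i$ that share no common linear factor over $\BC$; B\'{e}zout's theorem then bounds $|\{g_1 = c_1\} \cap \{g_2 = c_2\}| \leq 4$ in $\BC^2$, and hence in $\BZ^2$, for every $(c_1, c_2) \neq (0, 0)$, while the fibre over $(0, 0)$ consists only of the origin since $f$ is positive definite. Together with the classical ellipse count $|\{(x, y) \in \BZ^2 : f(x, y) \leq n^2\}| \sim \pi n^2 / \sqrt{4ac - b^2}$, this gives $S(n) \geq \frac{1}{4} |\{(x, y) \in \BZ^2 : f(x, y) \leq n^2\}|$, and combined with the trivial reverse inequality we conclude $S(n) \sim n^2$.

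The main technical step is the existence of the pair $(g_1, g_2)$ in the case $d \geq 2$. The set of elements of the $\BQ$-span of the $f_i$ sharing a common linear factor with a fixed nonzero $g_1$ forms a finite union of proper $\BQ$-subspaces; propriety reduces to showing that no single linear form can divide every $f_i$, which follows from positive definiteness of $f$ (ruling out a shared real factor, which would force $f$ itself to factor as a product of two real linear forms) together with Galois invariance of the rational $f_i$ (ruling out a shared complex-conjugate factor, which would force every $f_i$ to be a rational multiple of the same $L \bar{L}$, contradicting $\BQ$-linear independence when $d \geq 2$).
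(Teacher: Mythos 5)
Your proof is correct, and in the case that produces quadratic growth it takes a genuinely different route from the paper. The paper splits according to whether the lattice $\BZ V_1+\BZ V_2$ is arithmetic: in the arithmetic case it rescales and invokes Bernays' theorem (\Cref{quadratic-form-growth}), exactly as you do for $d=1$ (the two dichotomies coincide, since arithmeticity amounts to the Gram coefficients $a,b,c$ spanning a one-dimensional $\BQ$-vector space), while in the non-arithmetic case it simply cites the result of K\"uhnlein via \cite{MoreeOsburn} that non-arithmetic planar lattices have quadratically many distinct lengths. Your treatment of $d\geq 2$ replaces that citation with a self-contained argument: $\BQ$-linear independence of the $\alpha_i$ forces any collision of values of $f$ on $\BZ^2$ to be a simultaneous collision of all the rational forms $f_i$, and a B\'ezout bound for two forms in their span with no common linear factor shows the value map is at most $4$-to-$1$, so the number of distinct values is comparable to the lattice-point count. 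This buys elementarity and independence from the lattice-geometry literature; what it gives up is the precise asymptotic (you obtain $S(n)\asymp n^2$ with two-sided constants rather than an actual limit), which is all that is needed for \Cref{sub-Z2-con-growth}. Two minor repairs: the area of $\{f\leq n^2\}$ is $2\pi n^2/\sqrt{4ac-b^2}$, and your justification that the fibre of $(g_1,g_2)$ over $(0,0)$ is only the origin should not appeal to positive definiteness of $f$ --- when $d=3$ the form $f$ need not lie in the real span of $g_1,g_2$ --- but rather to the fact that the zero loci of $g_1$ and $g_2$ in $\BC^2$ are unions of lines through the origin sharing no common line, hence meet only at the origin.
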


\begin{proof} This is basically \cite[Proposition 1]{MoreeOsburn}. Recall that a two-dimensional lattice $L$ is said to be arithmetic if and only if there exists a real
number $\lambda$ such that $\lambda L$ is isometric to a $\BZ$-submodule of rank two in an
imaginary quadratic number field, otherwise it is said to be non-arithmetic. If the lattice corresponding to $V_1, V_2$ is non-arithmetic, we obtain quadratic growth by \cite[Corollary, p.166]{Kuehnlein}. Otherwise, possibly after scaling, we can apply \Cref{quadratic-form-growth} to obtain a growth rate of $\nicefrac{n^2}{\sqrt{\ln n}}$.
\end{proof}

Recall that a CAT(0) group is a group which acts properly and cocompactly on a CAT(0) space via isometries. 

\begin{thm}\label{sub-Z2-con-growth}
Let $G$ be a CAT(0) group which contains $\BZ^2$ as a subgroup, than the conjugacy growth of $G$ is strictly faster than linear.
\end{thm}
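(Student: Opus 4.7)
The plan is to produce a family of elements of $G$ whose word lengths grow linearly but whose translation lengths (on the ambient CAT(0) space) take on a super-linear number of distinct values; since translation length is a conjugacy invariant, this forces $g_c(n)$ to be strictly faster than linear.

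To carry this out, I would first fix a geometric action of $G$ on a CAT(0) space $X$. Because the action is proper and cocompact, every element of $G$ acts as a semi-simple isometry of $X$ (Bridson's theorem, see \cite{BH}). Let $A=\langle a_1,a_2\rangle\cong\BZ^2\subgroup G$ and apply the Flat Torus Theorem (\Cref{flattorusthm}): $\Min(A)$ is non-empty and splits as $Y\times\BE^2$, with $A$ acting trivially on $Y$ and by translations on $\BE^2$. Write $V_1,V_2\in\BE^2$ for the translation vectors attached to $a_1,a_2$. Since $A$ is torsion-free and acts properly on $X$, the translation homomorphism $A\to\BE^2$ has trivial kernel, so $V_1,V_2$ are linearly independent.

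The second step links displacement on $X$ to word length in $G$. I would pick a basepoint $x_0\in\Min(A)=Y\times\BE^2$. For any $a=a_1^xa_2^y\in A$, the isometry $a$ acts trivially on $Y$ and by translation $xV_1+yV_2$ on $\BE^2$, so
\[
d(x_0,ax_0)=\|xV_1+yV_2\|=|a|\eqnstop
\]
By the \v Svarc--Milnor lemma, the orbit map $G\to X$, $g\mapsto gx_0$, is a quasi-isometry, so there exists a constant $C>0$ with $|g|_S\le C\,d(x_0,gx_0)+C$ for every $g\in G$. Combined with the identity above, this shows that every $a=a_1^xa_2^y$ with $\|xV_1+yV_2\|\le (n-C)/C$ lies in $B_n(G,S)$.

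Finally, since $|a|$ is invariant under conjugation, elements with distinct translation lengths represent distinct conjugacy classes of $G$. Hence the number of conjugacy classes meeting $B_n(G,S)$ is bounded below by the cardinality of
\[
\bigl\{\|xV_1+yV_2\|\;\big|\;x,y\in\BZ,\ \|xV_1+yV_2\|\le (n-C)/C\bigr\}\eqnstop
\]
By \Cref{lattice-plane-growth} this set has size at least of order $n^2/\sqrt{\ln n}$, which is strictly faster than linear, completing the argument. The only point requiring care is the choice of basepoint: it must be taken inside $\Min(A)$ so that the displacement $d(x_0,ax_0)$ equals the translation length on the nose, rather than being bounded below by it with an unknown error; once this is arranged, the \v Svarc--Milnor comparison and the lattice counting of \Cref{lattice-plane-growth} close the proof with no further input.
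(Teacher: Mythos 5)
Your argument is correct and follows essentially the same route as the paper: use the Flat Torus Theorem to realize $\BZ^2$ as a lattice of translations on a flat in $\Min(A)$, compare displacement at a basepoint in that flat with word length via \v{S}varc--Milnor, and invoke the conjugacy invariance of translation length together with \Cref{lattice-plane-growth} to count superlinearly many distinct translation lengths. The only cosmetic remark is that injectivity of $A\to\BE^2$ alone does not force $V_1,V_2$ to be $\BR$-linearly independent; this follows instead from properness (discreteness of the orbit in $\BE^2$) or directly from part (c) of \Cref{flattorusthm}, a point the paper also leaves implicit.
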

\begin{proof}
Fix a generating set $S = \langle s_1,s_2,\cdots,s_k\rangle $ for $G$ and assume that $G$ acts on the CAT(0) complex $X$ properly and cocompactly. Since $G$ contains $\BZ^2$ and the asymptotics of the conjugacy growth function do not depend on $S$, we can assume that the elements $s_1,s_2$ are a generating set for $H \cong \BZ^2$ in $G$. Now by the Flat Torus Theorem, $H$ acts on a flat plane $P$ inside $X$ via translations. Let $x_0$ be a point in $P$, the translation length of any $h\in H$ can now be calculated easily via $d_X(x_0,hx_0)$ again by the Flat Torus Theorem.  Moreover, 
by the \v{S}varc-Milnor Lemma \cite[I.8.19]{BH}, there exist $C,D>0$ such that for any $g_1,g_2\in G$
\[
d_S(g_1,g_2) \leq Cd_X(g_1x_0,g_2x_0) + D \eqnstop
\]
Now given $n>0$, let $B_H^c(n)$ be the conjugacy classes of elements in $H$ with word length $\leq n$. Since translation length is an invariant of the conjugacy classes and $|g| = d_X(x_0,gx_0)$ for any $g\in H$, we have 
$$ \{ |g| \mid |g|\leq n,g\in H  \} \subseteq B_H^c(nC+D) $$
Now by \Cref{lattice-plane-growth}, $\{ |g| \mid |g|\leq n,g\in H  \}$ grows already faster than linear, hence the same holds for $B_H^c(n)$. Thus the conjugacy growth of $G$ is also faster than linear.
\end{proof}

In order to prove the rest part of \ref{thmb}, we also need the following result which is essentially due to Caprace and Sageev \cite{CS11}.

\begin{lem} \cite[Lemma 4.15]{vPW1} \label{CAT(0)cuberankrigidity} 
Let $G$ be a group which acts on a CAT(0) cube complex $X$ properly and cocompactly via isometries and suppose that $G$ is not virtually cyclic. Then either $G$ contains a rank one isometry or $G$ contains a free abelian subgroup of rank $2$.
\end{lem}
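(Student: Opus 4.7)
The plan is to invoke the rank rigidity theorem of Caprace--Sageev for CAT(0) cube complexes \cite{CS11}. Since $G$ is not virtually cyclic, after replacing $X$ by its $G$-essential core (a convex $G$-invariant subcomplex on which $G$ still acts cocompactly, essentially, and without a global fixed point at infinity), the Caprace--Sageev dichotomy applies: either $G$ already contains a rank-one isometry, in which case we are done, or $X$ admits a non-trivial product decomposition $X = X_1 \times X_2$ into unbounded essential CAT(0) cube subcomplexes, preserved by a finite-index subgroup $G_0 \leq G$.

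Assume we are in the product case. Let $\rho_i \colon G_0 \to \Isom(X_i)$ be the projections induced by the product decomposition. Using the double-skewering lemma of Caprace--Sageev applied inside each factor, and exploiting the essentiality and cocompactness of the projected $G_0$-action on each $X_i$, we produce elements $h_1, h_2 \in G_0$ such that $h_i$ acts as a hyperbolic isometry on $X_i$ while its hyperplane combinatorics is confined to the $i$-th factor. The latter condition forces $h_1$ and $h_2$ (or suitable powers thereof) to act on distinct coordinate factors, so they commute in $G_0$. Properness of the $G$-action on $X$ then excludes any non-trivial relation: from $h_1^a h_2^b = 1$, applying $\rho_1$ yields $\rho_1(h_1)^a = 1$, which forces $a = 0$ since $\rho_1(h_1)$ is hyperbolic; symmetrically $b = 0$. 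Hence $\langle h_1, h_2 \rangle \cong \BZ^2$.

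The main obstacle is extracting hyperbolic elements of $G_0$ whose translation axes lie in distinct factor slices. A naive lift of a hyperbolic element of $\rho_i(G_0)$ to $G_0$ need not commute with its counterpart from the other factor, since the lift may carry a non-trivial component in the complementary factor. The key observation, adapted from \cite{CS11}, is that hyperplanes in the product $X_1 \times X_2$ split as $H \times X_2$ or $X_1 \times H$; applying the double-skewering construction to hyperplanes of the first type (inside $X_1$) produces hyperbolic elements of $G_0$ whose axes are confined to $X_1$-slices, and the analogous construction in $X_2$ yields a commuting partner. This is where both the cube-complex combinatorics and the product structure are essential; in a general CAT(0) setting, producing commuting hyperbolic elements from a product decomposition would require extra hypotheses.
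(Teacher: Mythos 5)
Your high-level route is the intended one: the paper does not reprove this lemma but quotes it from \cite[Lemma~4.15]{vPW1} and describes it as ``essentially due to Caprace and Sageev'', and that proof does exactly what you propose at the start --- pass to the essential core and apply the rank rigidity dichotomy of \cite{CS11}, so that only the case $X=X_1\times X_2$ with both factors unbounded remains. The gap is in how you extract $\BZ^2$ from the product case. The double skewering lemma, applied to nested halfspaces of the form $\mathfrak h\times X_2\subsetneq\mathfrak k\times X_2$, yields $g\in G_0$ with $g(\mathfrak k\times X_2)\subsetneq\mathfrak h\times X_2$; this constrains only the $X_1$-component of $g$ and gives no control whatsoever over its action on $X_2$. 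Your assertion that such a $g$ has ``hyperplane combinatorics confined to the first factor'', hence an axis lying in an $X_1$-slice, is unjustified and false in general: for an irreducible uniform lattice acting on a product of two trees, both factor projections are injective, so no nontrivial element acts trivially on the complementary factor, yet these groups fall squarely into your product case. Since the cancellation $\rho_1(h_1^a h_2^b)=\rho_1(h_1)^a$ requires $\rho_1(h_2)=1$, both the commutativity of $h_1,h_2$ and the freeness of $\langle h_1,h_2\rangle$ rest on this false step, so the argument as written does not produce $\BZ^2$.

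The standard repair does not look for elements supported on a single factor. One finds $g\in G_0$ whose projections $g_1,g_2$ to the two factors are both hyperbolic; then $\Min(g)=\Min(g_1)\times\Min(g_2)$ contains a flat $\ell_1\times\ell_2\cong\BE^2$. Because the $G$-action is proper and cocompact, the centralizer $C_G(g)$ acts properly and cocompactly on $\Min(g)$ (the conjugation-plus-properness argument underlying the Flat Torus and Solvable Subgroup Theorems, see \cite[Ch.~II.6--7]{BH}), and the Flat Torus/Bieberbach machinery then shows that $C_G(g)$, hence $G$, contains $\BZ^2$. Alternatively, one can simply quote the corollary of rank rigidity in \cite{CS11} that records this dichotomy for proper cocompact actions, which is what \cite{vPW1} effectively does. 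Either way, the commuting pair comes from a centralizer, not from slice-supported skewering elements.
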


\begin{thm}\label{CAT(0)-cube-con-growth}
A CAT(0) cube group has at most linear  conjugacy growth if and only if it is virtually cyclic.
\end{thm}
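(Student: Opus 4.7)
The plan is to prove the two directions separately, with the nontrivial direction reducing via \Cref{CAT(0)cuberankrigidity} to the two cases handled in the previous results of this section and by acylindrical hyperbolicity.

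For the easy direction, suppose $G$ is virtually cyclic. Then $G$ has linear word growth, so the number of conjugacy classes meeting $B_n(G,S)$ is bounded above by $|B_n(G,S)|$, which is at most linear in $n$. Hence $g_c(n) \preceq n$.

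For the converse, let $G$ be a CAT(0) cube group which is not virtually cyclic; the goal is to show that its conjugacy growth is strictly faster than linear. By \Cref{CAT(0)cuberankrigidity}, one of the following dichotomies occurs: either $G$ contains a free abelian subgroup of rank two, or $G$ contains a rank one isometry of the cube complex on which it acts. In the first case, \Cref{sub-Z2-con-growth} applies directly and produces strictly faster than linear conjugacy growth. In the second case, by \cite{Sis15} the existence of a rank one isometry implies that $G$ is acylindrically hyperbolic (this is precisely the fact invoked in the proof of \Cref{bVCYC-thm}\eqref{npc-manifold} above). It is well known that any acylindrically hyperbolic group has exponential conjugacy growth (see, for instance, Hull--Osin), in particular strictly faster than linear. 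Combining the two cases, the assumption of at most linear conjugacy growth forces $G$ to be virtually cyclic.

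The main point to verify carefully is the second case, namely that a rank one isometry in the CAT(0) cube complex on which $G$ acts properly and cocompactly yields acylindrical hyperbolicity of $G$; this is the role played by \cite{Sis15} and has already been used in the proof of \Cref{bVCYC-thm}, so no further work is needed. Once this input is in hand, the argument is simply an application of the rank rigidity dichotomy \Cref{CAT(0)cuberankrigidity} together with \Cref{sub-Z2-con-growth}.
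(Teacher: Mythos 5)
Your proposal is correct and follows essentially the same route as the paper: reduce via \Cref{CAT(0)cuberankrigidity} to the rank one case (acylindrical hyperbolicity via \cite{Sis15} plus exponential conjugacy growth from Hull--Osin) and the $\BZ^2$ case (handled by \Cref{sub-Z2-con-growth}). The only addition is your explicit treatment of the easy direction, which the paper leaves implicit.
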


\proof 
By \Cref{CAT(0)cuberankrigidity}, we only need to deal with the case that $G$ contains a rank one isometry or $\BZ^2$. If $G$ contains a rank-one isometry, then $G$ is acylindrically hyperbolic \cite{Sis15} and has exponential conjugacy growth \cite[Theorem 1.1]{HuOs}. When $G$ contains $\BZ^2$, the result is implied by \Cref{sub-Z2-con-growth}.
\qed

As an application of \Cref{CAT(0)-cube-con-growth}, we give a new proof of \cite[Corollary 4.16]{vPW1}.

\begin{cor}
A CAT(0) group containing $\BZ^2$ does not have BVC. In particular, a CAT(0) cube group has BVC if and only if it is virtually cyclic.
\end{cor}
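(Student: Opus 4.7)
The plan is to reduce this corollary directly to the two conjugacy growth theorems just established, using the fact that CAT(0) groups are semihyperbolic (as shown in Bridson--Haefliger). By \cite[Corollary 2.5]{vPW2}, recalled in the introduction, any semihyperbolic group with BVC has at most linear conjugacy growth. Hence if a CAT(0) group $G$ has BVC, then $g_c(n) \preceq n$ for $G$.

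Now suppose $G$ is a CAT(0) group containing $\BZ^2$. By \Cref{sub-Z2-con-growth}, the conjugacy growth of $G$ is strictly faster than linear, contradicting the above. Therefore $G$ cannot have BVC, proving the first assertion.

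For the second assertion, let $G$ be a CAT(0) cube group with BVC. By semihyperbolicity and \cite[Corollary 2.5]{vPW2}, $G$ has at most linear conjugacy growth, so \Cref{CAT(0)-cube-con-growth} forces $G$ to be virtually cyclic. Conversely, any virtually cyclic group trivially has BVC, as witnessed by the single subgroup $V_1 = G$.

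There is no genuine obstacle here; the entire content has been packaged into the preceding conjugacy growth theorems, and the proof is a one-line application of the implication $\text{BVC} \Rightarrow$ at most linear conjugacy growth (valid in the semihyperbolic setting) against the faster-than-linear lower bounds of \Cref{sub-Z2-con-growth} and \Cref{CAT(0)-cube-con-growth}. The only subtle point to flag is the use of semihyperbolicity of CAT(0) groups, which is standard.
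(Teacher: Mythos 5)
Your proof is correct and follows essentially the same route as the paper: both reduce the statement to \cite[Corollary 2.5]{vPW2} (BVC implies at most linear conjugacy growth for CAT(0)/semihyperbolic groups) combined with \Cref{sub-Z2-con-growth} and \Cref{CAT(0)-cube-con-growth}. The only cosmetic difference is that you spell out the semihyperbolicity of CAT(0) groups and the trivial converse for virtually cyclic groups, which the paper leaves implicit.
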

\begin{proof}
By \cite[Corollary 2.5]{vPW2},  a CAT(0)  group has BVC if and only if it has at most linear conjugacy growth. Note that BVC implies $b \VCYC$ by definition \cite[Definition 1.10]{vPW2}. Now the Corollary is implied by \Cref{sub-Z2-con-growth} and \Cref{CAT(0)-cube-con-growth}. 
\end{proof}

\begin{rem}
Using \cite[Corollary 6.4]{CS11}, one can actually prove that a CAT(0) cube group has exponential conjugacy growth if and only if it is not virtually abelian.  
\end{rem}

\section{Finiteness of the Classifying Space \texorpdfstring{$\huu BG$}{BG}}

Given a group $G$ and a family of subgroups $\family F$ of $G$ we now want to study the finiteness properties of the classifying space $B_{\family F}(G) = E_{\family F}(G)/G$. Again we shall use the convention that $\u B G = B_{\FIN}(G)$ and $\uu B G = B_{\VCYC}(G)$. As the $G$-homotopy type of $E_{\family F}(G)$ is uniquely determined, so is the homotopy type of $B_{\family F}(G)$. However, if $B_{\family F}(G) \to X$ is some homotopy equivalence to another CW-complex $X$, there need not be a $G$-homotopy equivalence of $G$-CW complexes whose quotient realizes the given map. The situation is different for the trivial family $\family F = \TR$, since $EG$ is the universal cover of $BG$. Thus finiteness conditions of the $G$-CW complex $EG$ are equivalent to finiteness conditions of the CW-complex $BG$. For example, $BG$ has a finite model if and only if $EG$ has a finite model. We shall see below that a corresponding statement fails for finite-dimensionality if we take the family of finite or the family of virtually cyclic subgroups. The following question goes back to \cite[Remark 17]{JuanPinedaLeary} and motivated our study:
\begin{question} \label{uubg-finite-question}
Suppose $\uu BG$ is homotopy equivalent to a finite CW-complex. Is $\uu BG$ necessarily contractible?
\end{question}
In contrast to \Cref{uubg-finite-question}, in the case of the family of finite subgroups, Leary and Nucinkis showed \cite{LearyNucinkis2001} that every connected CW-complex is homotopy equivalent to $\underline{B}G$ for some group $G$. By \cite[Proposition 3]{LearyNucinkis2001} we know that $\pi_1( B_{\family F}(G) ) \cong G/N$ where $N$ is the smallest normal subgroup of $G$ containing all subgroups of $\family F$. In particular, it follows that $\uu B G$ is simply-connected for any group $G$. Then \Cref{uubg-finite-question} is equivalent to the question whether $\uu BG$ is contractible if all homology groups $H_{*}(\uu B G; \Z)$ are finitely generated. 
\Cref{uubg-finite-question} appears to be more difficult than \ref{ConjA} in the sense that our proofs for certain classes of groups depend on the validity of \ref{ConjA}.

In the following let us discuss the question whether $\uu BG$ being homotopy-equivalent to a finite-dimensional complex implies the existence of a finite-dimensional model for $\uu E G$. It is consistent with Zermelo-Fraenkel set theory with axiom of choice (ZFC) that for $G$ locally finite of cardinality $\aleph_n$ the minimal dimension of $\u EG$ is equal to $n + 1$ \cite[Example 5.32]{LueckWeiermann}. 
A lower bound for the dimension of $\u E G$ is provided by the rational cohomological dimension, namely we have $\cd_\Q(G) \leq \fingd(G)$. And it is consistent with ZFC that $\cd_\Q(G) = n+1$. Note that for $G$ locally finite $\VCYC(G) = \FIN(G)$ and $\uu BG = \u BG$ is contractible as we shall see below. In particular, it is consistent with ZFC that the gap between the minimal dimension of a model for $\u BG$ and the minimal dimension of a model for $\u E G$ is arbitrarily large. Actually, it is then also consistent with ZFC that there exists a locally finite group of cardinality $\aleph_{\omega}$ which does not admit a finite-dimensional model for $\uu E G = \u E G$. Summarizing, we have seen that if $\uu BG$ is homotopy-equivalent to a finite CW-complex, it is in general impossible to conclude that $\uu E G$ is finite-dimensional.

\begin{lem} \label{ub-for-virtually-cyclic-subgroups}
Let $V$ be a virtually cyclic group. Then $\u B V$ is contractible if and only if $V$ is finite or nonorientable. If $V$ is orientable, then $\u B V = S^1$. In particular, $H_n(\u BV;\Z) = 0$ for $n \geq 2$ in all cases.
\end{lem}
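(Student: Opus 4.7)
The plan is to treat the three cases of \Cref{vcstru} separately, in each case constructing an explicit small model for $\underline{E}V$ and reading off the quotient.

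If $V$ is finite, then $V \in \FIN(V)$, so a one-point space with trivial action is a model for $\underline{E}V$ and $\underline{B}V$ is a point, hence contractible. Now suppose $V$ is orientable, so there is an extension $1 \to F \to V \to \BZ \to 1$. First I would observe that every finite subgroup $H \leq V$ is contained in $F$: the image of $H$ in $\BZ$ is both finite and a subgroup of $\BZ$, hence trivial. I would then let $V$ act on $\BR$ through the quotient map $V \twoheadrightarrow \BZ$ with $\BZ$ acting by integer translations. For any finite $H \leq V$, $H \subseteq F$ acts trivially, so $\BR^H = \BR$ is contractible; any point stabilizer equals $F$ and is finite. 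Hence $\BR$ is a model for $\underline{E}V$, and $\underline{B}V = \BR/\BZ \cong S^1$.

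Now suppose $V$ is nonorientable, so there is an extension $1 \to F \to V \to D_\infty \to 1$ with $D_\infty = \BZ/2 \ast \BZ/2 = \langle a,b \mid a^2=b^2=1\rangle$. I would let $V$ act on $\BR$ through the quotient $V \twoheadrightarrow D_\infty$, where $a$ acts as the reflection $x \mapsto -x$ and $b$ as $x \mapsto 2-x$. A finite subgroup $H \leq V$ projects to a finite subgroup of $D_\infty$, and finite subgroups of $D_\infty$ are either trivial or a conjugate of $\langle a\rangle$ or $\langle b \rangle$; in every case the fixed-point set of the image in $\BR$ is a single point, so $\BR^H$ is either a point or all of $\BR$ (the latter exactly when $H \subseteq F$). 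Point stabilizers are either $F$ or an index-two extension of $F$ by a reflection, hence finite. Thus $\BR$ is again a model for $\underline{E}V$, and $\underline{B}V = \BR/D_\infty \cong [0,1]$, which is contractible.

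Finally, I would assemble these three computations: $\underline{B}V$ is contractible when $V$ is finite or nonorientable, and $\underline{B}V \simeq S^1$ when $V$ is infinite orientable; the homology vanishing $H_n(\underline{B}V;\Z)=0$ for $n\geq 2$ is immediate in all cases. I do not foresee any serious obstacle here; the only subtle point is verifying in the nonorientable case that the reflection-extended action genuinely gives $\underline{E}V$ rather than a space with mixed-type fixed sets, and this is handled by the above case analysis of finite subgroups of $D_\infty$.
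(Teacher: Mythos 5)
Your proof is correct and follows essentially the same route as the paper: in both the orientable and nonorientable cases one pulls back the standard $\Z$- resp.\ $D_\infty$-action on $\BR$ along the epimorphism with finite kernel and reads off the quotient $S^1$ resp.\ an interval. The only difference is that you spell out the verification of the fixed-point conditions (finite subgroups land in $F$ or project to reflections; infinite subgroups have empty fixed sets since stabilizers are finite), which the paper leaves implicit.
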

\begin{proof}
For $V = \Z$, we have of course $\u BV = BV = K(V,1) = S^1$. For $V = D_{\infty} = \Z/2 * \Z/2$ the infinite dihedral group, we get that $\pi_1(\u B D_{\infty}) = 1$ since $D_{\infty}$ is generated by elements of finite order. But more is true: we have $\R$ as a model for $\u E D_{\infty}$, and moreover $\R/D_{\infty} \cong [0,1/2]$.

More generally, if $V$ is an orientable virtually cyclic group, there exists an epimorphism $\pi \colon V \to \Z$ with finite kernel. Then $\R$ serves as a model for $\u E V$ by pulling back the standard $\Z$-action on $\R$ via $\pi$. Thus $\u B V = S^1$. Similarly, if $V$ is nonorientable, then $\u B V$ is contractible.
\end{proof}

\subsection{Locally \texorpdfstring{$\family F$}{F} Groups}

For $G$ a group and $\family F$ a family whose elements are finitely generated subgroups of $G$ we want to give an easy argument here to show that $B_{\family F} G$ is contractible for $G$ locally $\family F$, i.e. a group such that all its finitely generated subgroups lie in the family $\family F$. In \cite[p.~10]{JuanPinedaLeary} Juan-Pineda and Leary noted that $\uu B G$ is contractible for $G$ locally virtually cyclic and provide a proof in the case that $G$ is countable by constructing an explicit model.

Ramras \cite{Ramras} has a given a nice account on functorial models for $E_{\family F}(G)$ resp. $B_{\family F}(G)$. We shall use in the following that $B_{\family F}(G)$ can be viewed as the geometric realization of the nerve of the orbit category $\Or_{\family F}(G)$. Recall that the category $\Or_{\family F}(G)$ has as objects transitive $G$-sets $G/H$ with $H \in \family F$ and as morphisms $G$-maps. One observes that
\[
\Hom(G/H, G/K) \cong \{ g \in G \mid g H g^{-1} \subgroup K \} / \sim
\]
where $g \sim g k$ for all $k \in K$.

Also recall that a category $\mathcal C$ is \define{filtered} if the following two conditions are met:
\begin{enumerate}[(1)]
    \item For any two objects $X, Y$ in $\mathcal C$ there exists an object $Z$ in $\mathcal C$ and morphisms $X \to Z$ and $Y \to Z$.
    \item For two morphisms $f,g \colon X \to Y$ there exists an object $Z$ in $\mathcal C$ and a morphism $h \colon Y \to Z$ such that $hf = hg$.
\end{enumerate}
By a classical result, the nerve of a filtered category is contractible, see e.g. \cite[Corollary 2, p.~93]{QuillenI}.

\begin{prop} \label{locally-f-group-contractible-bf} Let $G$ be a group and let $\family F$ be a family whose elements are  finitely generated subgroups of $G$. If $G$ is locally $\family F$, then $B_{\family F} G$ is contractible.
\end{prop}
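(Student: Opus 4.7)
The plan is to use the identification $B_{\family F}(G) \simeq |\Or_{\family F}(G)|$ given above, together with the classical fact (recalled in the text) that the nerve of a filtered category is contractible. It therefore suffices to verify that the hypothesis ``$G$ is locally $\family F$'' forces $\Or_{\family F}(G)$ to be filtered, and then the conclusion is immediate.

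For the first filtering axiom, given objects $G/H$ and $G/K$ with $H, K \in \family F$, I would set $L := \langle H \cup K \rangle$. Since both $H$ and $K$ are finitely generated (as members of $\family F$, whose elements are assumed finitely generated), so is $L$; the locally $\family F$ hypothesis then gives $L \in \family F$. The inclusions $H \subseteq L$ and $K \subseteq L$ are witnessed by morphisms $G/H \to G/L$ and $G/K \to G/L$ in $\Or_{\family F}(G)$ represented by the identity element of $G$.

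For the second filtering axiom, given parallel morphisms $f, g \colon G/H \to G/K$ represented by elements $a, b \in G$, I would set $L := \langle K \cup \{a b^{-1}\} \rangle$ (the precise form of the extra generator depending on the convention for composition). Again $L$ is finitely generated, hence in $\family F$, and the morphism $h \colon G/K \to G/L$ represented by $1 \in G$ then coequalizes $f$ and $g$: the representatives of $h \circ f$ and $h \circ g$ are obtained by multiplying $a$ and $b$ by the same element, so they differ precisely by $ab^{-1} \in L$ and thus define the same morphism in $\Or_{\family F}(G)$.

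No step here presents a real obstacle; everything is a direct verification once the orbit-category model is in hand. The only substantive input is the observation that a subgroup of $G$ generated by finitely many elements coming from members of $\family F$ is itself finitely generated and so belongs to $\family F$ by the local hypothesis, and this is precisely what makes both filtering conditions go through. The resulting argument is uniform in the cardinality of $G$, which is an improvement over the explicit construction sketched in \cite[p.~10]{JuanPinedaLeary} for the countable locally virtually cyclic case.
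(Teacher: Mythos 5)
Your proof is correct and takes essentially the same route as the paper: identify $B_{\family F}(G)$ with the nerve of the orbit category $\Or_{\family F}(G)$, invoke Quillen's result that the nerve of a filtered category is contractible, and verify the two filtering axioms using that finitely generated subgroups of $G$ lie in $\family F$. The only cosmetic difference is in the coequalizing step, where the paper simply takes $L = \langle a, b, K \rangle$ and the canonical projection $G/K \to G/L$, which sidesteps the convention issue you flag: with $\alpha(H) = aK$ and $\beta(H) = bK$ the single extra generator you need is $b^{-1}a$ rather than $ab^{-1}$, since $aL = bL$ iff $b^{-1}a \in L$.
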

\begin{proof}
We verify that the orbit category $\Or_{\family F}(G)$ is filtered. 
\begin{enumerate}[(1)]
\item For two transitive $G$-sets $G/H$, $G/H'$, we note that $\langle H, H' \rangle$ is again in $\family F$ since $H, H'$ are finitely generated and $G$ is locally $\family F$. Then certainly there are $G$-maps $G/H \to G/\langle H, H' \rangle$ and $G/H' \to G/\langle H, H' \rangle$.
\item Let two $G$-maps $\alpha, \beta \colon G/H \to G/K$ be given. These are represented by elements $a \in G$ and $b \in G$, i.e. $\alpha(H) = aK$ and $\beta(H) = bK$. We consider the finitely generated subgroup $L = \langle a, b, K \rangle \in \family F$. Then we let $\pi \colon G/K \to G/L$ be the canonical map. It follows that $\pi \alpha = \pi \beta$.
\end{enumerate}
\end{proof}

In particular, \Cref{locally-f-group-contractible-bf} implies that $\uu B G$ is contractible for all locally virtually cyclic groups $G$.
One might ask whether there are groups $G$ that are not locally virtually cyclic but which nevertheless have a contractible classifying space $\uu BG$. These certainly exist by \cite[Example 4]{JuanPinedaLeary}. However, we cannot find such new examples by only considering filtered categories as the following result shows.

\begin{prop}
Let $G$ be a group and let $\family F$ be a family whose elements are finitely generated subgroups containing all cyclic subgroups. The category $\Or_{\family F}(G)$ is filtered if and only if $G$ is locally $\family F$.
\end{prop}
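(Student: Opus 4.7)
The ``if'' direction is exactly \Cref{locally-f-group-contractible-bf}, so the plan is to prove the converse: assuming $\Or_{\family F}(G)$ is filtered, I want every finitely generated subgroup of $G$ to lie in $\family F$.

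The key observation is that since $\family F$ is closed under taking subgroups and contains all cyclic subgroups, the trivial subgroup $\{e\}$ lies in $\family F$; so $G/\{e\}$ is an object of $\Or_{\family F}(G)$, and every element $g \in G$ gives a self-morphism $\mu_g \colon G/\{e\} \to G/\{e\}$ represented by $g$ in the sense of the orbit category description. Given an arbitrary finite tuple $g_1, \ldots, g_n \in G$, I would apply condition (2) iteratively to the parallel family $\mu_e = \mathrm{id}, \mu_{g_1}, \ldots, \mu_{g_n}$. Although (2) is phrased for just two parallel morphisms, an easy induction extends it to any finite parallel family: having produced $h$ coequalizing $f_1, \ldots, f_k$, apply (2) to the pair $h f_k, h f_{k+1}$ to further coequalize. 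This yields $L \in \family F$ and a morphism $\gamma \colon G/\{e\} \to G/L$ such that $\gamma \circ \mu_{g_i} = \gamma$ for every $i$.

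Unpacking the conclusion: if $\gamma$ is represented by $d \in G$, then under the composition law in $\Or_{\family F}(G)$ the composite $\gamma \circ \mu_{g_i}$ is represented by $d g_i$, so the equation $\gamma \circ \mu_{g_i} = \gamma$ means $d \sim d g_i$ in $\Hom(G/\{e\}, G/L)$, which under the equivalence $h \sim h\ell$ for $\ell \in L$ translates into $g_i \in L$. Therefore $L$ contains $g_1, \ldots, g_n$, so $\langle g_1, \ldots, g_n\rangle \subseteq L \in \family F$, and subgroup-closure gives $\langle g_1, \ldots, g_n \rangle \in \family F$. Since the tuple was arbitrary, $G$ is locally $\family F$. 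The only non-formal ingredient is the inductive promotion of (2) from pairs to finite parallel families, which is a standard generality about filtered categories, so I do not foresee a serious obstacle.
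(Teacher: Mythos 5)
Your proof is correct and follows essentially the same route as the paper: both extract the conclusion from condition (2) of filteredness applied to morphisms out of $G/1$ represented by group elements. The paper coequalizes the two maps $G/1 \to G/K$, $1 \mapsto K$ and $1 \mapsto aK$, and then inducts on the number of generators starting from a cyclic subgroup (which is where the hypothesis that $\family F$ contains all cyclic subgroups enters), whereas you coequalize all the endomorphisms $\mu_{g_1}, \ldots, \mu_{g_n}$ of $G/1$ at once via the standard finite-family extension of condition (2); the two inductions are interchangeable and both land on a single member of $\family F$ containing all the chosen generators.
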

\begin{proof} The proof of \Cref{locally-f-group-contractible-bf} showed that a locally $\family F$ group has a filtered orbit category. So let us suppose that $\Or_{\family F}(G)$ is a filtered category. Let $a \in G$ be non-trivial element and let $K \in \family F$. Consider the two $G$-maps $\pi \colon G/1 \to G/K, 1 \mapsto K$ and $\alpha \colon G/1 \to G/K, 1 \mapsto aK$. By the second property of filtered categories, there exists some $V \in \family F$ and a $G$-map $\lambda \colon G/K \to G/V$ such that $\lambda \circ \pi = \lambda \circ \alpha$. Let $x \in G$ so that $\lambda(K) = xV$. Since $\lambda$ is a $G$-map, we have $K \subgroup xVx^{-1}$. Moreover, $xV =(\lambda \circ \pi) (1) = (\lambda \circ \alpha)(1) = \lambda (aK) = a x V$ implies that $a \in x V x^{-1}$. Thus $a$ and $K$ both lie in the same subgroup $x V x^{-1} \in \family F$. Now, if $H \subgroup G$ is a finitely generated subgroup, then by an inductive argument we obtain that $H \in \family F$. The assumption that $\family F$ contains all cyclic subgroups is needed in the beginning of the induction.
\end{proof}

\subsection{Lück-Weiermann Construction}

Suppose we have a group $G$ and two families of subgroups $\family F \subseteq \family G$ of $G$. We want to recall a construction due to Lück and Weiermann \cite{LueckWeiermann} that allows us to obtain a model for $E_{\family G}(G)$ from a model for $E_{\family F}(G)$ using a pushout. The spaces that are being attached are classifying spaces for certain generalized normalizer subgroups. We will be interested in the case that $\family F = \FIN$ and $\family G = \VCYC$. As mentioned before, for large classes of groups there exist finite models for the classifying space of proper actions. Our strategy in answering \Cref{uubg-finite-question} can then be outlined as follows: For certain classes of groups we shall obtain $\uu E G$ from $\u E G$ by attaching infinitely many classifying spaces. In a second step we compute the homology of $\uu EG$, at least partially, in the hope that the attached classifying spaces generate enough classes in the homology of $\uu EG$.

To perform the construction we will assume that the set $\mathcal G \setminus \mathcal F$ of subgroups of $G$ is equipped with an equivalence relation $\sim$ that satisfies the following additional properties, which we will refer to as (P):
\begin{equation}\tag{P} \label{equivalence-relation-additional-properties}
   \parbox{0.9\textwidth}{
	
	\begin{enumerate}[(1)]
\item If $H, K \in \mathcal G \setminus \mathcal F$ with $H \subgroup K$, then $H \sim K$.
\item If $H, K \in \mathcal G \setminus \mathcal F$ and $g \in G$, then $H \sim K \Leftrightarrow gHg^{-1} \sim g K g^{-1}$.
\end{enumerate}
   
   }
\end{equation}

\begin{notation} We let $[\mathcal G \setminus \mathcal F]$ denote the set of equivalence classes under the equivalence relation $\sim$ and we denote by $[H] \in [\mathcal G \setminus \mathcal F]$ the equivalence class of an element $H \in \mathcal G \setminus \mathcal F$.
\end{notation}

By property (2) of (P) the $G$-action by conjugation on the set $\family G \setminus \family F$ induces a $G$-action on $[\mathcal G \setminus \mathcal F]$. We then define the subgroup
$$
N_G[H] = \{ g \in G \mid [g^{-1}H g] = [H] \},
$$
which is equal to the isotropy group of $[H]$ under the $G$-action we just explained.

Moreover, we define a family of subgroups of $N_G[H]$ by
$$
\mathcal G[H] = \{ K \subgroup N_G[H] \mid K \in \mathcal G \setminus \mathcal F, [K] = [H] \} \cup ( \mathcal F \cap N_G[H] ).
$$
Note that $\mathcal G[H] \subseteq \mathcal G$.

\begin{defn}[Equivalence relation on $\VCYC \setminus \FIN$]\label{def:strong-equivalence-relation} In the case that $\mathcal F = \FIN$ and $\mathcal G = \VCYC$ we choose the equivalence relation defined by
$$
V \sim W \Leftrightarrow |V \cap W| = \infty,
$$
where $V, W \in \VCYC \setminus \FIN$.
\end{defn}

\begin{thm}[{\cite[Theorem 2.3]{LueckWeiermann}}]\label{theorem:passing-to-larger-families}
Let $\mathcal F \subseteq \mathcal G$ and $\sim$ as above an equivalence relation on $\mathcal G \setminus \mathcal F$ satisfying \eqref{equivalence-relation-additional-properties}.
Let $I$ be a complete system of representatives $[H]$ of the $G$-orbits in $[\mathcal G \setminus \mathcal F]$ under the $G$-action induced by conjugation. Choose arbitrary $N_G[H]$-CW-models for $E_{\mathcal F \cap N_{G}[H]}(N_{G}[H])$ and $E_{\mathcal G[H]}(N_{G}[H])$, and an arbitrary $G$-CW-model for $E_{\mathcal F}(G)$. Define a $G$-CW-complex $X$ by the following cellular $G$-pushout

\begin{center}
\begin{tikzpicture}
  \matrix (m) [matrix of math nodes,row sep=3em,column sep=4em,minimum width=2em, text height=1.5ex, text depth=0.25ex]
  {
     \coprod_{[H] \in I} G \times_{N_G[H]} E_{\mathcal F \cap N_{G}[H]}(N_{G}[H]) & E_{\mathcal F}(G) \\
     \coprod_{[H] \in I} G \times_{N_G[H]} E_{\mathcal G[H]}(N_{G}[H]) & X \\};
  \path[-stealth]
    (m-1-1) edge node [right] {$\coprod_{[H] \in I} \id_{G} \times_{N_G[H]} f_{[H]}$} (m-2-1)
            edge node [above] {$i$} (m-1-2)
    (m-2-1.east|-m-2-2) edge node [below] {}
            node [above] {} (m-2-2)
    (m-1-2) edge node [right] {} (m-2-2)
            ;
\end{tikzpicture}
\end{center}

such that $f_{[H]}$ is a cellular $N_{G}[H]$-map for every $[H] \in I$ and $i$ is an inclusion of $G$-CW-complexes, or such that every map $f_{[H]}$ is an inclusion of $N_G[H]$-CW-complexes for every $[H] \in I$ and $i$ is a cellular $G$-map. Then $X$ is a model for $E_{\mathcal G}(G)$.
\end{thm}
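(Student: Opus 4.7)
The plan is to verify the characterizing universal property of $E_{\mathcal G}(G)$: for every subgroup $H \subgroup G$ the fixed point set $X^H$ should be contractible if $H \in \mathcal G$ and empty otherwise. The cellularity hypothesis (either the bottom horizontal map or every left-vertical attaching map being an inclusion of CW-complexes) guarantees that $X$ inherits a canonical $G$-CW-structure and that the square is a homotopy $G$-pushout. Since taking $H$-fixed points commutes with $G$-pushouts along a cellular inclusion, it then suffices to analyse the four corners of the resulting pushout of spaces one by one and glue.

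First I would record the standard formula
\[
(G \times_{N} Y)^H \;\cong\; \coprod_{gN \in (G/N)^H} Y^{g^{-1}Hg} \eqncomma
\]
valid for any $N \subgroup G$ and any $N$-CW-complex $Y$, where $(G/N)^H = \{gN : g^{-1}Hg \subseteq N\}$. This reduces each corner to a coproduct of fixed point sets of the model classifying spaces appearing in the construction. The argument then splits into three cases, according to whether $H \notin \mathcal G$, $H \in \mathcal F$, or $H \in \mathcal G \setminus \mathcal F$.

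If $H \notin \mathcal G$, then no conjugate of $H$ lies in $\mathcal F$, in any $\mathcal F \cap N_G[H_0]$, or in any $\mathcal G[H_0]$, so all four corners have empty $H$-fixed points and $X^H = \emptyset$. If $H \in \mathcal F$, then every conjugate $g^{-1}Hg$ contained in $N_G[H_0]$ lies in $\mathcal F \cap N_G[H_0] \subseteq \mathcal G[H_0]$, so the left vertical map becomes on $H$-fixed points a coproduct of weak equivalences between contractible spaces; combined with contractibility of $E_{\mathcal F}(G)^H$, this forces $X^H$ to be contractible.

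The delicate case, and where I expect the main technical work to lie, is $H \in \mathcal G \setminus \mathcal F$. Let $[H_0] \in I$ be the unique representative whose $G$-orbit contains $[H]$. Using property (2) of the equivalence relation one checks that for every $[H_1] \in I$ distinct from $[H_0]$ and every coset $gN_G[H_1] \in (G/N_G[H_1])^H$, the subgroup $g^{-1}Hg$ lies in neither $\mathcal F \cap N_G[H_1]$ nor $\mathcal G[H_1]$; so these summands contribute empty fixed points on both ends of the left vertical map. Only the $[H_0]$-summand matters. On it, the indexing set $(G/N_G[H_0])^H$ splits according to whether $[g^{-1}Hg] = [H_0]$ or not: indices with $[g^{-1}Hg] = [H_0]$ contribute empty fixed points upstairs and contractible ones downstairs, while the remaining indices contribute empty fixed points on both sides. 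Together with $E_{\mathcal F}(G)^H = \emptyset$, the pushout therefore degenerates to a disjoint union of contractible spaces indexed by the cosets $gN_G[H_0]$ with $[g^{-1}Hg] = [H_0]$. The crux is then to show that this indexing set is a singleton, which amounts to proving that any two $g, g' \in G$ with $[g^{-1}Hg] = [H_0] = [g'^{-1}Hg']$ satisfy $g^{-1}g' \in N_G[H_0]$; this follows from properties (1) and (2) of $\sim$ applied to $H$ and its conjugates. This single-orbit bookkeeping is the main obstacle, and once it is in place $X^H$ is contractible.
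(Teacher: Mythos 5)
Your argument is correct and is essentially the standard proof of this result: the paper itself gives no proof (it quotes the statement from L\"uck--Weiermann, Theorem 2.3), and the proof there proceeds exactly as you outline, by checking $H$-fixed points of the four corners via the formula for $(G\times_N Y)^H$ and the three cases $H\notin\mathcal G$, $H\in\mathcal F$, $H\in\mathcal G\setminus\mathcal F$. The ``single-coset'' step you flag as the crux is immediate from the fact that $N_G[H_0]$ is by definition the isotropy group of $[H_0]$ under the conjugation action on $[\mathcal G\setminus\mathcal F]$, so no further work is needed there.
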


\begin{notation} Let $\mathcal F \subseteq \mathcal G$ be two families of subgroups of $G$. We say that $G$ satisfies $(\CondM_{\mathcal F \subseteq \mathcal G})$ if every subgroup $H \in \mathcal G \setminus \mathcal F$ is contained in a unique subgroup $H_{\max}$ which is maximal in $\mathcal G \setminus \mathcal F$, i.e. if $K \in \mathcal G \setminus \mathcal F$ with $H_{\max} \subgroup K$, then $K = H_{\max}$.

We say that a group $G$ satisfies $(\CondNM_{\mathcal F \subseteq \mathcal G})$ if $G$ satisfies $(\CondM_{\mathcal F \subseteq \mathcal G})$ and every maximal subgroup $H_{\max} \in \mathcal G \setminus \mathcal F$ is a self-normalizing subgroup, i.e. $N_G H_{\max} = H_{\max}$.
\end{notation}

\begin{cor} \label{uueg-from-ueg-condm}
Let $G$ be a group satisfying $(\CondM_{\FIN \subseteq \VCYC})$. Let $\mathcal M$ be a complete system of representatives of the conjugacy classes of maximal infinite virtually cyclic subgroups $V \subgroup G$. Then $\uu EG$ can be obtained by the following cellular $G$-pushout:
\begin{center}
\begin{tikzpicture}
  \matrix (m) [matrix of math nodes,row sep=3em,column sep=4em,minimum width=2em, text height=1.5ex, text depth=0.25ex]
  {
     \coprod_{V \in \mathcal{M}} G \times_{N_G V} \u{E} N_G V & \u{E} G \\
     \coprod_{V \in \mathcal M } G \times_{N_G V} E W_G V & \uu EG \\};
  \path[-stealth]
    (m-1-1) edge node [right] {$\coprod_{F \in \mathcal M} \id_G \times f_V$} (m-2-1)
            edge node [above] {$i$} (m-1-2)
    (m-2-1.east|-m-2-2) edge node [below] {}
            node [above] {} (m-2-2)
    (m-1-2) edge node [right] {} (m-2-2)
            ;
\end{tikzpicture}
\end{center}
Here, $E W_G V$ is viewed as an $N_G V$-CW-complex via the projection map $N_G V \epiarrow W_G V = N_G V / V$, the maps starting from the left upper corner are cellular and one of them is an inclusion of $G$-CW-complexes.
\end{cor}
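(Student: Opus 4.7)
The plan is to apply Theorem \ref{theorem:passing-to-larger-families} with $\mathcal F = \FIN$, $\mathcal G = \VCYC$, and the equivalence relation from Definition \ref{def:strong-equivalence-relation}, then use the hypothesis $(\CondM_{\FIN \subseteq \VCYC})$ to identify all the data appearing in the pushout.

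First, I would translate the equivalence classes in $\VCYC \setminus \FIN$ under $\sim$ into maximal infinite virtually cyclic subgroups. If $V, W \in \VCYC \setminus \FIN$ with $|V \cap W| = \infty$, then $V \cap W \in \VCYC \setminus \FIN$, and by $(\CondM_{\FIN \subseteq \VCYC})$ there is a unique maximal $(V \cap W)_{\max}$ containing $V \cap W$, which must coincide with both $V_{\max}$ and $W_{\max}$. Hence $V \sim W$ if and only if $V_{\max} = W_{\max}$, so equivalence classes are in bijection with maximal subgroups in $\VCYC \setminus \FIN$, and the set $I$ of $G$-orbit representatives of $[\VCYC \setminus \FIN]$ corresponds precisely to the set $\mathcal M$ of conjugacy class representatives of maximal infinite virtually cyclic subgroups. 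Moreover, for such a maximal $V$, any $g \in G$ with $[g^{-1}Vg] = [V]$ satisfies $(g^{-1}Vg)_{\max} = V_{\max} = V$; but $g^{-1}Vg$ is itself maximal, so $g^{-1}Vg = V$. Therefore $N_G[V] = N_G V$.

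Next, I identify the two relevant families on $N_G V$. Clearly $\mathcal F \cap N_G[V] = \FIN \cap N_G V$, so a model for $E_{\mathcal F \cap N_G[V]}(N_G[V])$ is $\u E N_G V$. For $\mathcal G[V]$, the key observation is that every finite subgroup $K \subgroup N_G V$ is already contained in $V$: since $V$ is normal in $N_G V$, the subgroup $\langle K, V\rangle$ has $V$ as a finite-index subgroup, hence lies in $\VCYC \setminus \FIN$, and maximality of $V$ forces $\langle K, V\rangle = V$. Combined with the description of $\mathcal G[V]$, this gives $\mathcal G[V] = \{K \subgroup V\}$, the family of all subgroups of $V$ viewed as subgroups of $N_G V$. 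Now $EW_G V$, regarded as an $N_G V$-CW-complex via $\pi \colon N_G V \epiarrow W_G V$, has $K$-fixed points empty unless $\pi(K) = 1$, i.e. $K \subgroup V$, in which case the fixed set is all of $EW_G V$ and hence contractible. Thus $EW_G V$ is a model for $E_{\mathcal G[V]}(N_G[V])$.

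Finally, inserting these identifications into the pushout of Theorem \ref{theorem:passing-to-larger-families} yields exactly the square displayed in the corollary, with $\mathcal M$ playing the role of $I$. The cellularity and inclusion conditions on the maps are arranged exactly as in Theorem \ref{theorem:passing-to-larger-families}. The only real content beyond bookkeeping is the two identifications made above (that $N_G[V] = N_G V$ and that finite subgroups of $N_G V$ lie in $V$); both rely on $V$ being maximal in $\VCYC \setminus \FIN$, so condition $(\CondM_{\FIN \subseteq \VCYC})$ is what makes the general Lück--Weiermann pushout collapse into this clean form.
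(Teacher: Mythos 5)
Your derivation is correct and is exactly the intended one: the paper states this corollary without proof because it is Lück--Weiermann's Corollary 2.8, obtained from \Cref{theorem:passing-to-larger-families} by the identifications you make ($[V]\mapsto V_{\max}$ is a bijection onto maximal elements, $N_G[V]=N_GV$, finite subgroups of $N_GV$ lie in $V$, and $EW_GV$ models $E_{\mathcal G[V]}(N_GV)$). All the steps, in particular the use of $(\CondM_{\FIN \subseteq \VCYC})$ and normality of $V$ in $N_GV$, are sound.
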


\begin{cor}\label{corollary:passing-to-larger-families-nm-groups}
Let $G$ be a group satisfying $(\CondNM_{\FIN \subseteq \VCYC})$ and let $\mathcal M$ be a complete system of representatives of the conjugacy classes of maximal infinite virtually cyclic subgroups. Then $\uu EG$ can be obtained via the following cellular $G$-pushout:
\begin{center}
\begin{tikzpicture}
  \matrix (m) [matrix of math nodes,row sep=3em,column sep=4em,minimum width=2em, text height=1.5ex, text depth=0.25ex]
  {
     \coprod_{V \in \mathcal{M}} G \times_{V} \u{E}V & \u{E}G \\
     \coprod_{V \in \mathcal M } G/V & \uu EG \\};
  \path[-stealth]
    (m-1-1) edge node [right] {$\coprod_{V \in \mathcal M} p$} (m-2-1)
            edge node [above] {$i$} (m-1-2)
    (m-2-1.east|-m-2-2) edge node [below] {}
            node [above] {} (m-2-2)
    (m-1-2) edge node [right] {} (m-2-2)
            ;
\end{tikzpicture}
\end{center}
Here, $i$ is an inclusion of $G$-CW-complexes and $p$ is the obvious projection.
\end{cor}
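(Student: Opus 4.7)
The plan is to derive this corollary directly from \Cref{uueg-from-ueg-condm} by exploiting the self-normalizing hypothesis. Since $(\CondNM_{\FIN \subseteq \VCYC})$ implies $(\CondM_{\FIN \subseteq \VCYC})$, we are entitled to the pushout square produced by \Cref{uueg-from-ueg-condm}, and the task reduces to identifying each of its constituent pieces under the extra assumption that $N_G V = V$ for every maximal infinite virtually cyclic $V$.

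First I would observe that under $(\CondNM)$, for each representative $V \in \mathcal M$ we have $N_G V = V$, so the Weyl group is trivial: $W_G V = N_G V / V = 1$. Therefore the chosen model $E W_G V$ becomes a one-point $N_G V$-CW-complex (with the trivial action factoring through $W_G V = 1$), and the associated induced $G$-space is
\[
G \times_{N_G V} E W_G V \;=\; G \times_V \{\mathrm{pt}\} \;\cong\; G/V .
\]
Similarly, $G \times_{N_G V} \u E N_G V = G \times_V \u E V$, which identifies the upper-left corner of the new pushout with the upper-left corner of the pushout of \Cref{uueg-from-ueg-condm}.

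Next I would identify the left-hand vertical map. In \Cref{uueg-from-ueg-condm} it is $\coprod_V \id_G \times_{N_G V} f_V$, where $f_V \colon \u E N_G V \to E W_G V$ is a cellular $N_G V$-map. With $E W_G V$ a point this map $f_V$ is uniquely (up to cellular homotopy) the constant projection $\u E V \to \mathrm{pt}$, so after inducing up it becomes the canonical projection $p \colon G \times_V \u E V \to G/V$. The top horizontal map $i$ and the cellularity/inclusion properties are inherited verbatim from \Cref{uueg-from-ueg-condm}.

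The only thing to check is that this rewritten square is still a cellular $G$-pushout with $G$-homotopy type $\uu E G$; this is automatic since pushouts are preserved by the homeomorphisms $G \times_{N_G V} E W_G V \cong G/V$ and $G \times_{N_G V} \u E N_G V \cong G \times_V \u E V$ that we just exhibited. No real obstacle arises here — the content of the corollary is entirely bookkeeping once one notes that $W_G V$ collapses to the trivial group under the self-normalizing hypothesis, and the mildly delicate point is just to remember that $E W_G V$ being a single orbit (a point) forces $G \times_{N_G V} E W_G V$ to be the orbit $G/V$ rather than something larger.
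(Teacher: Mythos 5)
Your derivation is correct and is exactly the intended one: the paper states this as an immediate consequence of \Cref{uueg-from-ueg-condm} (itself a specialization of \Cref{theorem:passing-to-larger-families}), and the whole content is the bookkeeping you carry out, namely that $N_G V = V$ forces $W_G V = 1$, hence $E W_G V \simeq \mathrm{pt}$ and $G \times_{N_G V} E W_G V \cong G/V$, with $f_V$ becoming the projection $p$. No issues.
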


\begin{lem} \label{weiermann-index-set-bvcyc} Let $G$ be a group and let $I$ be a complete set of representatives of conjugacy classes of elements in $[\VCYC \setminus \FIN]$ as in the statement of \Cref{theorem:passing-to-larger-families}. If $G$ has $b\VCYC$, then $I$ is finite. 
\end{lem}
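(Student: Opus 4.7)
The plan is to use the defining list of virtually cyclic subgroups from $b\VCYC$ to produce a finite set that hits every $G$-orbit in $[\VCYC \setminus \FIN]$. Let $H_1, \dots, H_n$ be a finite collection of virtually cyclic subgroups witnessing $b\VCYC$, so every cyclic subgroup of $G$ is conjugate into some $H_i$. After discarding the finite ones, I may assume each $H_i$ is infinite virtually cyclic (if all $H_i$ were finite then $G$ would be locally finite and $[\VCYC \setminus \FIN]$ would be empty, so there is nothing to show).

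Next, I would take an arbitrary class $[V] \in [\VCYC \setminus \FIN]$, so $V$ is an infinite virtually cyclic subgroup of $G$. Then $V$ contains an element $v$ of infinite order, and by the $b\VCYC$ hypothesis there exist $g \in G$ and an index $i$ with $\langle v \rangle \subseteq g H_i g^{-1}$. Since $g H_i g^{-1}$ is infinite virtually cyclic, it lies in $\VCYC \setminus \FIN$, and the intersection $V \cap g H_i g^{-1}$ contains the infinite cyclic group $\langle v \rangle$. By \Cref{def:strong-equivalence-relation} this forces $V \sim g H_i g^{-1}$, i.e.\ $[V] = [g H_i g^{-1}] = g \cdot [H_i]$ under the $G$-action on $[\VCYC \setminus \FIN]$ induced by conjugation.

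Consequently every class in $[\VCYC \setminus \FIN]$ lies in the $G$-orbit of some $[H_i]$, so there are at most $n$ orbits and any complete set $I$ of representatives has $|I| \le n$. There is no real obstacle here; the only point to be mindful of is that the equivalence relation is the \emph{strong} one of \Cref{def:strong-equivalence-relation}, which is exactly what makes the step $\langle v \rangle \subseteq V \cap g H_i g^{-1}$ enough to conclude $V \sim g H_i g^{-1}$. The argument also makes clear that the cyclic-subgroup version of $b\VCYC$ (not the a priori stronger BVC) is already sufficient.
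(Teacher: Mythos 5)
Your argument is correct and is essentially the paper's own proof: both pick an infinite cyclic subgroup of $V$, conjugate it into one of the $b\VCYC$ witnesses, and use that the resulting infinite intersection forces equivalence under the relation of \Cref{def:strong-equivalence-relation}, bounding the number of $G$-orbits in $[\VCYC \setminus \FIN]$ by $n$. One cosmetic quibble: if all the $H_i$ were finite then $G$ would be a torsion group, not necessarily locally finite, but that still yields exactly what you need, namely that $G$ has no infinite virtually cyclic subgroups and $[\VCYC \setminus \FIN]$ is empty.
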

\begin{proof}
Let $V_1, \ldots, V_n$ be witnesses to $b\VCYC$ for $G$. We claim that $|I| \leq n$. For each $V_i$ that is infinite, choose some infinite cyclic subgroup $H_i \subgroup V_i$. If $V \subgroup G$ is some infinite virtually cyclic subgroup, choose some infinite cyclic subgroup $H \subgroup V$. By the $b\VCYC$ property there exists some $g \in G$ such that $H^g \subgroup V_j$ for some $j$. But then $H^g \cap H_j$ is an infinite group, hence $V^g \sim H_j$.
\end{proof}

In the light of \Cref{weiermann-index-set-bvcyc} one has to be cautious that the converse does not hold. First of all, observe that representatives of $[\VCYC \setminus \FIN]$ might as well taken to be infinite cyclic. Then having finitely many conjugacy classes of elements in $[\VCYC \setminus \FIN]$ is equivalent to the statement that there are only finitely many commensurability classes of infinite order elements in the group. Note that by 
\cite[Lemma 4.14]{vPW2} there exists a torsion-free group with only two commensurability classes that fails to have $b\CYC$.

\begin{defn} For a group $G$ we denote by $\Tor(G)$ the subgroup of $G$ which is generated by all elements of finite order.
\end{defn}

As noted in the introduction of this section, we have $\pi_1(\u B G) \cong G / \Tor(G)$.

\begin{rem} Note that $\Tor(G)$ is a characteristic subgroup of $G$. In general, the subgroup $\Tor(G)$ contains elements of infinite order and the quotient $G/\Tor(G)$ is not torsion-free. As an example, consider $G = \Z *_{\Z} D_{\infty} = \langle g,a,b \mid g^2 = ab, a^2 = 1 = b^2 \rangle$. There is an epimorphism $\pi \colon G \to \Z/2$ by killing $a$ and $b$. In an amalgamated product, an element of finite order is conjugate to an element lying in one of the factor groups. Hence $\Tor(G) \subgroup \ker(\pi)$, and thus $g \notin \Tor(G)$ defines an element of order 2 in $G/\Tor(G)$.
\end{rem}

Suppose $\alpha \colon G \to Q$ is a group homomorphism. It induces a map $\u B \alpha \colon \u B G \to \u B Q$ and $\pi_1(\u B \alpha)$ can then be identified with the natural map
\[
G/\Tor(G) \to Q/\Tor(Q)
\]
which is induced by $\alpha$. Thus $H_1(\u B\alpha) \colon H_1(\u BG) \to H_1(\u BQ)$ can be identified with the abelianization of the above map:
$$
H_1(\u B \alpha) \colon (G/\Tor(G))^{\operatorname{ab}} \to (Q/\Tor(Q))^{\operatorname{ab}} 
$$
For an abelian group $A$, we write $A_f = A/\Tor(A)$ for the torsion-free part.

\begin{lem}
Let $G$ be a group satisfying $(\CondM_{\FIN \subseteq \VCYC})$, then there is an exact sequence
\begin{align*}
\ldots \to \bigoplus_{V \in \mathcal M} H_2(\u B N_G V) \to & H_2(\u BG) \oplus \bigoplus_{V \in \mathcal M} H_2(B W_G V) \to H_2(\uu BG) \to  \\
& \to \bigoplus_{V \in \mathcal M} (N_G V/\Tor(N_G V))^{\abel}  \to (G/\Tor(G))^{\abel} \oplus \bigoplus_{V \in \mathcal M} (W_G V)^{\abel} \to 0
\end{align*}
\end{lem}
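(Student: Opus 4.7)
The plan is to apply the Mayer--Vietoris sequence to the pushout square obtained by passing to $G$-orbits in the cellular $G$-pushout of \Cref{uueg-from-ueg-condm}, and then to use the fact that $\uu BG$ is simply connected to truncate the sequence as claimed.

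First, I would take $G$-orbits of the $G$-pushout of \Cref{uueg-from-ueg-condm}. Using the identification $(G \times_{N_G V} Y)/G \cong Y/N_G V$, together with the fact that $V$ acts trivially on $E W_G V$ (so that $E W_G V/N_G V = E W_G V/W_G V = B W_G V$), one obtains a pushout square of CW-complexes with corners
$\coprod_{V \in \CM} \u B N_G V$, $\u B G$, $\coprod_{V \in \CM} B W_G V$, and $\uu B G$.
Since the upper horizontal map is the quotient of an inclusion of $G$-CW-complexes, it is still an inclusion of CW-complexes, so the square is a homotopy pushout.

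Next, I would invoke the Mayer--Vietoris sequence for this pushout, which yields the long exact sequence
\begin{align*}
\ldots \to \bigoplus_{V} H_2(\u B N_G V) &\to H_2(\u BG) \oplus \bigoplus_{V} H_2(B W_G V) \to H_2(\uu BG) \\
&\to \bigoplus_{V} H_1(\u B N_G V) \to H_1(\u BG) \oplus \bigoplus_{V} H_1(B W_G V) \to H_1(\uu BG).
\end{align*}
To match the statement of the lemma, I would then identify the $H_1$-terms via Hurewicz. For any group $H$, the results of Leary--Nucinkis recalled above give $\pi_1(\u B H) \cong H/\Tor(H)$, and hence $H_1(\u B H) \cong (H/\Tor(H))^{\abel}$. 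In the same way, $H_1(B W_G V) \cong (W_G V)^{\abel}$.

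Finally, since every element of $G$ generates a cyclic, hence virtually cyclic, subgroup, the smallest normal subgroup of $G$ containing all virtually cyclic subgroups is all of $G$; by \cite[Proposition 3]{LearyNucinkis2001} this forces $\uu B G$ to be simply connected, so $H_1(\uu BG) = 0$. Exactness of Mayer--Vietoris at $H_1(\u BG) \oplus \bigoplus_V H_1(B W_G V)$ then yields the surjection at the right end of the asserted sequence. The only step that is not entirely formal is checking that passing to $G$-orbits of the cellular $G$-pushout produces a homotopy pushout of spaces, but this is immediate from the fact that an inclusion of $G$-CW-complexes descends to an inclusion of CW-complexes.
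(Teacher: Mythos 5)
Your proof is correct and follows essentially the same route as the paper, whose entire argument is that the sequence is the Mayer--Vietoris sequence of the quotient of the pushout from \Cref{uueg-from-ueg-condm}; you have simply filled in the identifications of the corners and of the $H_1$-terms via \cite[Proposition 3]{LearyNucinkis2001}, all of which are as the authors intend.
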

\begin{proof} The long exact sequence arises as the Mayer-Vietoris sequence for the pushout obtained from \Cref{uueg-from-ueg-condm}.
\end{proof}

\begin{lem} \label{second-homology-nm-fin-vcyc}
Let $G$ be a group satisfying $(\operatorname{NM}_{\FIN \subset \VCYC})$ and let $\mathcal M$ be a complete system of representatives of the conjugacy classes of maximal infinite virtually cyclic subgroups. Then there is an exact sequence
\begin{align*}
0 \to H_2(\u BG) \to H_2(\uu BG) \to \bigoplus_{V \in \mathcal M}
(V/\Tor(V))^{\abel}  \to (G/\Tor(G))^{\abel} \to 0 \eqnstop
\end{align*}
Here, $H_2(\u BG) \to H_2(\uu BG)$ is induced by the canonical map $\u BG \to \uu BG$ and the inclusions $V \to G$ for $V \in \mathcal M$ induce the other map. For $n > 2$, the canonical map $H_n(\u BG) \to H_n(\uu BG)$ is an isomorphism. Moreover, note that
\[
\bigoplus_{V \in \mathcal M} (V/\Tor(V))^{\abel} \cong \bigoplus_{V \in \mathcal M^{o}} \Z \eqncomma
\]
where $\mathcal M^o$ denotes the subset of $\mathcal M$ consisting only of orientable infinite virtually cyclic subgroups.
\end{lem}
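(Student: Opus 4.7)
The plan is to apply Mayer--Vietoris to the $G$-pushout of \Cref{corollary:passing-to-larger-families-nm-groups} after passing to orbit spaces. Since quotients commute with colimits, dividing the $G$-pushout by $G$ produces a pushout of ordinary CW-complexes
\[
\begin{tikzcd}
\coprod_{V \in \mathcal M} \u B V \ar[r] \ar[d] & \u B G \ar[d] \\
\coprod_{V \in \mathcal M} \mathrm{pt} \ar[r] & \uu B G,
\end{tikzcd}
\]
in which the top horizontal map is still a cofibration (coming from the cellular $G$-inclusion $i$). The associated Mayer--Vietoris long exact sequence will be the main tool.

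Next I would feed in the computation from \Cref{ub-for-virtually-cyclic-subgroups}: for any infinite virtually cyclic $V$, the space $\u B V$ is either contractible or homotopy equivalent to $S^1$, so $H_n(\u B V) = 0$ for all $n \geq 2$. Since also $H_n(\mathrm{pt}) = 0$ for $n \geq 1$, the Mayer--Vietoris sequence collapses to give $H_n(\u B G) \xrightarrow{\cong} H_n(\uu B G)$ for $n > 2$, and in low degrees produces the five-term sequence
\[
0 \to H_2(\u B G) \to H_2(\uu B G) \to \bigoplus_{V \in \mathcal M} H_1(\u B V) \to H_1(\u B G) \to H_1(\uu B G) \to 0.
\]

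To finish, I would identify the $H_1$-terms. By Hurewicz together with the fact that $\pi_1(\u B X) \cong X/\Tor(X)$ (recalled just before this lemma), we have $H_1(\u B V) \cong (V/\Tor(V))^{\abel}$ and $H_1(\u B G) \cong (G/\Tor(G))^{\abel}$, and the connecting map is induced by the inclusions $V \hookrightarrow G$. Furthermore $H_1(\uu B G) = 0$: the space $\uu B G$ is simply connected because its fundamental group, by the criterion cited at the start of Section 5, is $G$ modulo the normal closure of all virtually cyclic subgroups, and this normal closure is $G$ itself since every element generates a cyclic subgroup. This yields the stated four-term exact sequence.

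For the final isomorphism with $\bigoplus_{V \in \mathcal M^o} \Z$, I would invoke the structure result \Cref{vcstru}. In the orientable case $V/F \cong \Z$ is torsion-free, forcing $\Tor(V) = F$ and hence $(V/\Tor(V))^{\abel} \cong \Z$. In the nonorientable case $V/F \cong \Z/2 \ast \Z/2$ is generated by torsion elements whose lifts to $V$ have squares in the finite group $F$ and are therefore themselves torsion, forcing $\Tor(V) = V$ and the abelianization to vanish. I do not anticipate a serious obstacle; the content is entirely bookkeeping once the correct pushout from \Cref{corollary:passing-to-larger-families-nm-groups} is in place and the homotopy type of $\u B V$ from \Cref{ub-for-virtually-cyclic-subgroups} is used.
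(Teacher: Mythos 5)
Your proposal is correct and follows essentially the same route as the paper: take the $G$-quotient of the pushout from \Cref{corollary:passing-to-larger-families-nm-groups}, run Mayer--Vietoris, kill the terms $H_n(\u BV)$ for $n\geq 2$ via \Cref{ub-for-virtually-cyclic-subgroups}, and use $H_1(\uu BG)=0$ together with the identification $H_1(\u BX)\cong (X/\Tor(X))^{\abel}$. Your explicit verification of the final isomorphism $\bigoplus_{V\in\mathcal M}(V/\Tor(V))^{\abel}\cong\bigoplus_{V\in\mathcal M^o}\Z$ via \Cref{vcstru} is a detail the paper leaves implicit, and it is done correctly.
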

\begin{proof} By taking the $G$-quotient of the pushout of \Cref{corollary:passing-to-larger-families-nm-groups} we obtain the following long exact sequence
\begin{align*}
\ldots \to \bigoplus_{V \in \mathcal M} H_2(\u B V) \to H_2(\u BG) \to H_2(\uu BG) \to \bigoplus_{V \in \mathcal M} H_1(\u BV)  \to H_1(\u BG) \to 0
\end{align*}
The sequence is exact at the right, since $\uu BG$ is simply-connected, so $H_1(\uu BG) = 0$. By \Cref{ub-for-virtually-cyclic-subgroups}, $H_n(\u B V) = 0$ for all virtually cyclic groups $V$ for $n \geq 2$. 
\end{proof}

By \cite[Example 3.6]{LueckWeiermann} a hyperbolic group satisfies the condition $(\operatorname{NM}_{\FIN \subseteq \VCYC})$. Moreover, by \cite[Theorem 13]{JuanPinedaLeary} there are infinitely many conjugacy classes of orientable maximal infinite virtually cyclic subgroups. Hence we obtain:

\begin{cor} Let $G$ be a non-elementary hyperbolic group. Then $H_2(\uu BG)$ contains a free abelian group of infinite rank.
\end{cor}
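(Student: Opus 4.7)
The plan is to apply \Cref{second-homology-nm-fin-vcyc} to $G$. The hypotheses are satisfied: by \cite[Example 3.6]{LueckWeiermann} every hyperbolic group satisfies $(\operatorname{NM}_{\FIN \subseteq \VCYC})$, and by \cite[Theorem 13]{JuanPinedaLeary} the set $\mathcal{M}^o$ of conjugacy classes of orientable maximal infinite virtually cyclic subgroups is infinite (this is where non-elementarity is used). Plugging this in, \Cref{second-homology-nm-fin-vcyc} gives the exact sequence
\[
0 \to H_2(\u BG) \to H_2(\uu BG) \xrightarrow{\phi} \bigoplus_{V \in \mathcal M^o} \Z \xrightarrow{\psi} (G/\Tor(G))^{\abel} \to 0 \eqnstop
\]

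The next step is to argue that $\ker \psi = \im \phi$ is free abelian of infinite rank. Since $G$ is hyperbolic, it is finitely generated, hence $(G/\Tor(G))^{\abel}$ is a finitely generated abelian group. Tensoring the short exact sequence
\[
0 \to \ker \psi \to \bigoplus_{V \in \mathcal M^o} \Z \to (G/\Tor(G))^{\abel} \to 0
\]
with $\Q$ (which is flat) and comparing $\Q$-dimensions, one concludes that $(\ker \psi) \otimes \Q$ has infinite $\Q$-dimension, so $\ker \psi$ has infinite rank. Since subgroups of free abelian groups are free abelian, $\ker \psi$ is a free abelian group of infinite rank.

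Finally, the short exact sequence
\[
0 \to H_2(\u BG) \to H_2(\uu BG) \xrightarrow{\phi} \ker \psi \to 0
\]
splits because $\ker \psi$ is free abelian, so $H_2(\uu BG)$ contains $\ker \psi$ as a direct summand, which is the required free abelian group of infinite rank. The only non-routine ingredient is the existence of infinitely many conjugacy classes of orientable maximal virtually cyclic subgroups, which we quote from Juan-Pineda--Leary; everything else is a formal consequence of the long exact sequence, the finite generation of $G^{\abel}$, and the fact that subgroups of free abelian groups are free.
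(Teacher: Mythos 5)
Your proposal is correct and follows exactly the route the paper intends: apply \Cref{second-homology-nm-fin-vcyc} using \cite[Example 3.6]{LueckWeiermann} for the $(\operatorname{NM}_{\FIN\subseteq\VCYC})$ condition and \cite[Theorem 13]{JuanPinedaLeary} for the infinitude of $\mathcal M^o$. The paper leaves the final bookkeeping implicit, whereas you spell out why $\ker\psi$ is free abelian of infinite rank and why the sequence splits; this is a welcome elaboration, not a different argument.
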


This was already shown by Juan-Pineda and Leary \cite[Corollary 16]{JuanPinedaLeary}, albeit with a slightly different proof.

\subsection{Abelian and Poly-\texorpdfstring{$\Z$}{Z}-Groups}

Juan-Pineda and Leary computed in \cite[Example 3]{JuanPinedaLeary} that $H_2( \uu B \Z^2)$ and $H_3(\uu B \Z^2)$ are free abelian of infinite rank using an explicit model that they constructed. Let us consider more generally $G = \Z^n$ for $n \geq 2$. 
Using \Cref{uueg-from-ueg-condm} and the fact that $N_G V = G = \Z^n$, $W_G V = G/V \cong \Z^{n-1}$ for $V$ maximal infinite cyclic, we obtain the following long exact sequence
\[
0 \to H_{n+1}(\uu BG) \to \bigoplus_{V \in \mathcal M} H_n(B G) \to H_n(B G) \oplus \bigoplus_{V \in \mathcal M} H_n(B \Z^{n-1}) \to \ldots \eqncomma
\]
where $\mathcal M$ denotes the set of maximal infinite cyclic subgroups of $G$. Note that $\mathcal M$ is infinite and since $H_n(B \Z^{n-1}) = 0$ and $H_n(B \Z^n) \cong \Z$, it follows that $H_{n+1}(\uu B G)$ is a free abelian group of infinite rank. This also implies that $\vcycgd(\Z^n) \geq n + 1$. In fact, by \cite[Example 5.21]{LueckWeiermann} we have $\vcycgd(\Z^n) = n + 1$. For $G$ finitely generated abelian we have $G \cong \Z^n \oplus T$ with $T$ finite abelian. It follows that $H_{n+1}(\uu BG)$ contains a free abelian subgroup of infinite rank as a direct summand whenever $G$ is not virtually cyclic.

\begin{prop} \label{abelian-finite-uubg}
Let $G$ be an abelian group that is not locally virtually cyclic. Then $H_2(\uu BG)$ is not finitely generated.
\end{prop}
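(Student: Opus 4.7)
The plan is to apply the general Lück--Weiermann construction (\Cref{theorem:passing-to-larger-families}) with $\mathcal F = \FIN$ and $\mathcal G = \VCYC$, form the Mayer--Vietoris sequence of the resulting pushout after $G$-quotienting, and exhibit infinitely many linearly independent elements in the relevant kernel by using the primitive vectors of a $\BZ^2$-subgroup of $G$.

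First I would use the hypothesis to locate a copy of $\BZ^2$ inside $G$. Since $G$ is abelian and not locally virtually cyclic, there is a finitely generated subgroup $H \subseteq G$ which is not virtually cyclic; by the structure theorem $H \cong \BZ^n \oplus T$ for some $n \geq 2$, giving $\BZ^2 \hookrightarrow G$. Next I would apply \Cref{theorem:passing-to-larger-families} with the equivalence relation of \Cref{def:strong-equivalence-relation}; since $G$ is abelian, $N_G[V] = G$ for every class $[V] \in I$, and the $G$-quotient of the resulting cellular $G$-pushout becomes
\[
\begin{array}{ccc}
\coprod_{[V] \in I} \u B G & \longrightarrow & \u B G \\
\downarrow & & \downarrow \\
\coprod_{[V] \in I} B_{\VCYC[V]}(G) & \longrightarrow & \uu B G.
\end{array}
\]
Since every element of $G$ generates a cyclic subgroup, $\pi_1(\uu B G) = 0$, and the corresponding Mayer--Vietoris sequence yields a surjection $H_2(\uu B G) \twoheadrightarrow \ker \phi$ with
\[
\phi \colon \bigoplus_{[V] \in I} G/\Tor(G) \longrightarrow G/\Tor(G) \oplus \bigoplus_{[V] \in I} G/N_{[V]}, \qquad (x_V)_V \longmapsto \Bigl(\textstyle\sum_V x_V,\ (x_V \bmod N_{[V]})_V\Bigr).
\]
Here $H_1(\u B G) = G/\Tor(G)$ and $H_1(B_{\VCYC[V]}(G)) = G/N_{[V]}$ by the Leary--Nucinkis description of $\pi_1$, where $N_{[V]}$ is the subgroup of $G$ generated by $\Tor(G)$ together with all infinite cyclic subgroups commensurable with $V$.

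The main obstacle is to show that $\ker \phi$ is not finitely generated, and this is where the subgroup $\BZ^2 \subseteq G$ enters. For distinct primitive vectors $v \neq \pm w$ in $\BZ^2$, the intersection $\langle v\rangle \cap \langle w\rangle$ is trivial already inside the torsion-free group $\BZ^2$, so the classes $[\langle v\rangle]$ attached to primitive vectors of $\BZ^2$ (up to sign) are pairwise distinct elements of $I$. By construction $v \in N_{[\langle v\rangle]}$, so every integer linear dependence $\sum_i a_i v_i = 0$ among primitive vectors of $\BZ^2$ lifts to the element $(a_i v_i)_{[\langle v_i\rangle]} \in \ker \phi$, and this lift is injective because $\BZ^2 \hookrightarrow G/\Tor(G)$ (as $\BZ^2 \cap \Tor(G) = 0$). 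The module of such integer linear dependences is the kernel of the surjection $\bigoplus_v \BZ \twoheadrightarrow \BZ^2$ from countably many primitive vectors onto a rank-$2$ group, hence free abelian of countably infinite rank. Therefore $\ker \phi$ contains a free abelian subgroup of infinite rank, which forces $H_2(\uu B G)$ to be non-finitely generated.
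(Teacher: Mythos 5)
Your argument is correct and follows essentially the same route as the paper: apply the L\"uck--Weiermann pushout with $N_G[V]=G$, pass to the $G$-quotient, and read off from the Mayer--Vietoris sequence that $H_2(\uu BG)$ surjects onto the kernel of the map $\bigoplus_{[V]\in I} G/\Tor(G) \to G/\Tor(G)\oplus\bigoplus_{[V]\in I} G/N_{[V]}$. The only difference is that you supply an explicit witness (relations among primitive vectors of a $\BZ^2$-subgroup) for the non-finite-generation of that kernel, a step the paper's proof leaves as an observation.
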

\begin{proof} First note that an abelian group is locally virtually cyclic if and only if it does not contain a copy of $\Z^2$ as a subgroup. In particular, it follows that the complete set $I$ of representatives of elements in $[\VCYC \setminus \FIN]$ is infinite. Since $G$ is abelian we have $N_G[V] = G$ for any virtually cyclic $V$. By \Cref{theorem:passing-to-larger-families} there exists a $G$-pushout 
\begin{center}
\begin{tikzpicture}
  \matrix (m) [matrix of math nodes,row sep=3em,column sep=4em,minimum width=2em, text height=1.5ex, text depth=0.25ex]
  {
     \coprod_{V \in I} G \times_{G} \u E G & \u E G \\
     \coprod_{V \in I} G \times E_{\VCYC[V]}G & \uu E G \\};
  \path[-stealth]
    (m-1-1) edge node [right] {$\coprod_{V \in I} \id_G \times_G f_{[V]}$} (m-2-1)
            edge node [above] {$i$} (m-1-2)
    (m-2-1.east|-m-2-2) edge node [below] {}
            node [above] {} (m-2-2)
    (m-1-2) edge node [right] {} (m-2-2)
            ;
\end{tikzpicture}
\end{center}
Here, $\VCYC[V] = \{ K \subgroup G \mid K \in \VCYC(G) \text{ and } |K \cap V| = \infty \} \cup \FIN$. After taking the quotient by $G$ we obtain the following part of the Mayer-Vietoris sequence:
\[
\ldots \to H_2(\uu BG) \to \bigoplus_{V \in I} H_1(\u B G) \to H_1(\u B G) \oplus \bigoplus_{V \in I} H_1(B_{\VCYC[V]}G) \to H_1(\uu BG) = 0
\]
As before, let $G_f = G/\Tor(G)$ denote the torsion-free quotient of $G$.
Now the last non-trivial map in the long exact sequence can be identified with
\[
\theta \colon \bigoplus_{V \in I} G_f \to G_f \oplus \bigoplus_{V \in I} G_f/N_V
\]
where $N_V = \langle K \mid K \text{ cyclic and } |K \cap V| = \infty \} \subgroup G_f$ and given by the sum of $\id_G$ and the canonical projections. Then $\ker(\theta) = \{ (g_V)_{V \in I} \in \bigoplus_{V \in I} N_V \mid \sum_{V \in I} g_V = 0 \}$ which is not finitely generated. Then $H_2(\uu BG)$, which surjects onto $\ker(\theta)$ cannot be finitely generated.
\end{proof}

We see in particular that an abelian group $G$ has a contractible classifying space $\uu BG$ if and only if it is locally virtually cyclic.
It is also worthwhile to note that a torsion-free locally cyclic group is isomorphic to a subgroup of the rational numbers $\Q$, see e.g \cite[Chapter VIII, Section 30]{Kurosh1955}.

We call a group $G$ \define{poly-$\Z$} if there exists a chain of subgroups
$1 = G_0 \subgroup G_1 \subgroup G_2 \subgroup \ldots \subgroup G_n = G$
such that $G_i \unlhd G_{i+1}$ and $G_{i+1}/G_i$ is infinite cyclic for all $i = 0, 1, \ldots, n-1$.
Note that poly-$\Z$ groups do not necessarily satisfy the condition $\CondM_{\FIN \subseteq \VCYC}$. An example is already provided by the non-trivial extension $\Z \rtimes \Z$, see \cite[Example 3.7]{LueckWeiermann}. For a poly-$\Z$ group $G$ we know that the cohomological dimension $\cd(G)$ is given by $\cd(G) = \max \{ i \mid H_i(G; \Z/2) \neq 0 \}$, see e.g. \cite[Example 5.26]{LueckSurveyClassifyingSpaces}.

\begin{prop}
Let $G$ be a poly-$\Z$ group that is not infinite cyclic. Then there is some $n$ such that $H_n(\uu B G; \Z/2)$ is not finitely generated. 
\end{prop}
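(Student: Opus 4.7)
The plan is to exhibit some integer $n$ for which $H_n(\uu BG;\Q)$ is infinite-dimensional; by the universal coefficient theorem this forces $H_n(\uu BG;\Z/2)$ to be infinite-dimensional over $\Z/2$, hence not finitely generated. The method is a generalization of the rank count used above for $G=\Z^d$, implemented through the Lück--Weiermann construction.

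First I would apply Theorem \ref{theorem:passing-to-larger-families} with $\mathcal F=\FIN$ and $\mathcal G=\VCYC$, using the commensurability equivalence relation of Definition \ref{def:strong-equivalence-relation}. Since $G$ is poly-$\Z$, hence torsion-free, for each $[V]\in I$ the commensurator $N_G[V]$ admits a unique maximal cyclic subgroup $V_{\max}$ commensurable with $V$, and $V_{\max}$ is characteristic (hence normal) in $N_G[V]$. One may then realize $E_{\VCYC[V]}(N_G[V])$ as the pullback of $E(N_G[V]/V_{\max})$ along the projection, so that the $G$-quotient of the Lück--Weiermann pushout becomes a homotopy pushout with $\uu BG$ as target, of the diagram
\[
BG\longleftarrow\coprod_{[V]\in I} BN_G[V]\longrightarrow\coprod_{[V]\in I} B(N_G[V]/V_{\max}).
\]
Let $d=\cd G=\operatorname{length}(G)\geq 2$; then $\u BG=BG$ is realized by a closed aspherical $d$-manifold, each $N_G[V]$ is poly-$\Z$ of length $\ell(V):=\operatorname{length}(N_G[V])\leq d$, and each $N_G[V]/V_{\max}$ is poly-$\Z$ of length $\ell(V)-1$, so all three classes of groups are rational Poincar\'e duality groups.

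Next I would check that $I$ is infinite: a short induction on the poly-$\Z$ series produces a copy of $\Z^2\subseteq G$, and since $G$ is virtually solvable yet not virtually cyclic, it fails $b\VCYC$ by Theorem \ref{bVCYC-thm}(a); combined with a direct analysis of the action of the (virtually solvable) image of $G$ in $\GL_2(\Z)$ on $\mathbb{P}^1(\Q)$, this forces $I$ to be infinite. Because $\ell(V)\in\{1,\dots,d\}$ takes only finitely many values, pigeonhole now furnishes a largest integer $m$ with $|\{[V]\in I:\ell(V)=m\}|=\infty$; by the choice of $m$, only finitely many $[V]\in I$ satisfy $\ell(V)>m$.

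The Mayer--Vietoris sequence of the above pushout in degree $m+1$ with $\Q$-coefficients contains the map
\[
\varphi\colon\bigoplus_{[V]\in I} H_m(BN_G[V];\Q)\longrightarrow H_m(BG;\Q)\oplus\bigoplus_{[V]\in I} H_m(B(N_G[V]/V_{\max});\Q).
\]
The source contains $\bigoplus_{[V]:\ell(V)=m}\Q$ as an infinite-dimensional direct summand (by Poincar\'e duality, each such class contributes a copy of $\Q$, and there are infinitely many such $[V]$), while the target is finite-dimensional: $H_m(BG;\Q)$ is finite-dimensional, and $H_m(B(N_G[V]/V_{\max});\Q)$ vanishes unless $\ell(V)\geq m+1$, of which there are only finitely many by the choice of $m$. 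Hence $\ker\varphi$ is infinite-dimensional; a parallel count shows that the image of $H_{m+1}(BG;\Q)\oplus\bigoplus_{[V]} H_{m+1}(B(N_G[V]/V_{\max});\Q)$ in $H_{m+1}(\uu BG;\Q)$ is finite-dimensional, so exactness at $H_{m+1}(\uu BG;\Q)$ forces it to be infinite-dimensional, giving $n=m+1$. The main technical obstacle I anticipate is the rigorous verification that $I$ is infinite, which in the most general poly-$\Z$ case requires ruling out the possibility that $G$ permutes the $\mathbb{P}^1(\Q)$-worth of commensurability classes inside $\Z^2\subseteq G$ with only finitely many orbits.
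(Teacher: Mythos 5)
Your overall strategy (the L\"uck--Weiermann pushout, Mayer--Vietoris after dividing out by $G$, a pigeonhole on the lengths of the normalizers, and top-degree Poincar\'e duality to produce infinitely many classes) is the same as the paper's, but three of your steps fail as stated. The most serious is the passage through rational coefficients, which is broken at both ends. A poly-$\Z$ group need not be orientable as a Poincar\'e duality group: for the Klein bottle group $K=\langle a,t\mid tat^{-1}=a^{-1}\rangle$ one has $H_2(K;\Q)=0$ while $H_2(K;\Z/2)=\Z/2$, so your claim that each $[V]$ with $\ell(V)=m$ contributes a copy of $\Q$ to $H_m(BN_G[V];\Q)$ is false and the source of $\varphi$ may fail to be infinite-dimensional. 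Moreover, even granting that $H_n(\uu BG;\Q)$ is infinite-dimensional, the universal coefficient theorem does not make $H_n(\uu BG;\Z/2)$ infinite: an abelian group such as $\bigoplus_{\BN}\Z[1/2]$ has infinite rational rank but vanishes after tensoring with $\Z/2$. Both problems disappear if you run the whole argument with $\Z/2$ coefficients, using $\cd(H)=\max\{i\mid H_i(H;\Z/2)\neq 0\}$ for poly-$\Z$ groups $H$; this is what the paper does and is why the statement is phrased over $\Z/2$ in the first place.

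Second, your pushout presupposes that each class $[V]$ contains a unique maximal cyclic subgroup $V_{\max}$ that is normal in $N_G[V]$, so that $E_{\VCYC[V]}(N_G[V])$ pulls back from a free $(N_G[V]/V_{\max})$-space. This fails already for the Klein bottle group (the paper notes that poly-$\Z$ groups need not satisfy $(\CondM_{\FIN\subseteq\VCYC})$): there $N_G[\langle t\rangle]=G$ and the subgroups $\langle a^it\rangle$, $i\in\Z$, are infinitely many distinct maximal cyclic subgroups all commensurable with $\langle t\rangle$. One must instead take a suitable normal infinite cyclic $C$ and use $\u EW_GC$ for the (no longer torsion-free) Weyl group, as in L\"uck--Weiermann's Theorem~5.13; your dimension count survives this correction because $\fingd(W_GC)=\cd(N_GC)-1$. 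Third, your verification that $I$ is infinite is incomplete, as you concede: failure of $b\VCYC$ alone does not imply that $I$ is infinite (the paper explicitly warns that the converse of \Cref{weiermann-index-set-bvcyc} fails), and the proposed analysis of an action on $\mathbb{P}^1(\Q)$ is not even well posed when $\Z^2$ is not normal in $G$, besides requiring control of the $G$-conjugation orbits on commensurability classes. The paper's argument is cleaner and you should adopt it: if $I$ were finite, then since every $N_GC$ and $W_GC$ is virtually poly-$\Z$ and hence admits a finite-type classifying space for proper actions, the pushout would yield a finite-type model for $\uu EG$; this gives BVC, hence $b\VCYC$, contradicting \Cref{bVCYC-thm} for the solvable, non-virtually-cyclic group $G$.
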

\begin{proof}
By \cite[Example 5.26]{LueckSurveyClassifyingSpaces} we know that there exists a finite model for $\u E H$ for any virtually poly-$\Z$ group $H$. In \cite[Theorem 5.13]{LueckWeiermann} a model of minimal dimension for $\uu E G$ is being constructed. In the course of this proof one obtains the following pushout, where the index set $I$ runs over certain infinite cyclic subgroups of $G$:
\begin{center}
\begin{tikzpicture}
  \matrix (m) [matrix of math nodes,row sep=3em,column sep=4em,minimum width=2em, text height=1.5ex, text depth=0.25ex]
  {
     \coprod_{C \in I} G \times_{N_G C} \u E N_G C & \u E G \\
     \coprod_{C \in I} G \times_{N_G C} \u E W_G C & \uu E G \\};
  \path[-stealth]
    (m-1-1) edge node [right] {$\coprod_{C \in I} \id_G \times_{N_G C} f_C$} (m-2-1)
            edge node [above] {$i$} (m-1-2)
    (m-2-1.east|-m-2-2) edge node [below] {}
            node [above] {} (m-2-2)
    (m-1-2) edge node [right] {} (m-2-2)
            ;
\end{tikzpicture}
\end{center}
Here, $i$ is an inclusion of $G$-CW complexes and $f_C$ is a cellular $N_G C$-map for every $C \in I$. Observe that $I$ has to be infinite. Otherwise, we would obtain a classifying space $\uu E G$ of finite type since $N_G C$ and $W_G C$ are virtually poly-$\Z$. But this is impossible by \Cref{bVCYC-thm} since $G$ is solvable but not virtually cyclic.
By taking the quotient by $G$ one obtains the following pushout:

\begin{center}
\begin{tikzpicture}
  \matrix (m) [matrix of math nodes,row sep=3em,column sep=4em,minimum width=2em, text height=1.5ex, text depth=0.25ex]
  {
     \coprod_{C \in I} \u B N_G C & \u B G \\
     \coprod_{C \in I} \u B W_G C & \uu B G \\};
  \path[-stealth]
    (m-1-1) edge node [right] {} (m-2-1)
            edge node [above] {} (m-1-2)
    (m-2-1.east|-m-2-2) edge node [below] {}
            node [above] {} (m-2-2)
    (m-1-2) edge node [right] {} (m-2-2)
            ;
\end{tikzpicture}
\end{center}
Of course, as $G$ is torsion-free, we have $\u BG = BG$ and $\u B N_G C = B N_G C$. From the pushout, we obtain the following Mayer-Vietoris sequence, suppressing the coefficient group $\Z/2$ in the notation:
\[
\ldots \to H_{k+1}(\uu BG) \to H_k\left(\coprod_{C \in I} B N_G C \right) \to H_k(BG) \oplus H_k \left( \coprod_{C \in I} \u B W_G C \right) \to \ldots 
\]
We also observe that $\gd(G) = \vcd(G) = \cd(G)$, $\gd(N_G C ) = \cd(N_G C )$ and $\fingd( W_G C) = \cd(N_G C ) - 1$, the proof of which can be found in the proof of \cite[Theorem 5.13]{LueckWeiermann} as well. In particular, we see that the homology groups of all spaces appearing in the pushout, will vanish in large enough degrees. Now, let $k$ be the largest integer such that there are infinitely many $C \in I$ with $\gd( N_G C) = k$ and with only finitely many $C \in I$ with $\gd(N_G C) = k+1$. Then there are only finitely many $C \in I$ with $\gd( \u B W_G C) \leq k$. Observe that $H_k( N_G C; \Z/2) \neq 0$ for infinitely many $C$. As $H_k(BG)$ is finite, the above exact sequence shows that $H_{k+1}(\uu BG)$ cannot be finitely generated.
\end{proof}

As a corollary we obtain an affirmative answer to \Cref{uubg-finite-question} for the class of poly-$\Z$-groups.

%\bibliographystyle{amsplain}
%\bibliography{refs}{}

\printbibliography

\end{document}